\topskip \setlength{\parindent}{0pt} \setlength{\parskip}{5pt plus
\numberwithin{equation}{section}
\newtheorem{theorem}{Theorem}[section]
\newtheorem{definition}[theorem]{Definition}
\newtheorem{remark}[theorem]{Remark}
\newtheorem{lemma}[theorem]{Lemma}
\newtheorem{example}[theorem]{Example}
\begin{document}

\def\NN{\mathbb N}
\def\L{\mathcal{L}}
\def\W{\mathcal{W}}
\def\N{\mathcal{N}}
\def\P{\mathcal{P}}
\def\({{\rm{(}}}
\def\){{\rm{)}}}
\def\c{{\rm{CN}}}
\def\p{{\mathbf p}}
\def\q{{\mathbf q}}
\def\e{{\mathbf e}}
\def\onev{{\mathbf 1}}
\def\ra{\rightarrow}

\def\+{\oplus}
\def\mn{\mbox{-}}
\def\newop#1{\expandafter\def\csname #1\endcsname{\mathop{\rm #1}\nolimits}}
\def\vs{\backslash_{\_} /}
\def\spic{\psdots*[dotstyle=*](0,0)
\rput(0.55,0){$\ldots$}
\psdots*[dotstyle=o](0.25,0)(0.9,0)
\psline(-0.1,-0.05)(-0.1,-0.1)(1.05,-0.1)(1.05,-0.05)
\rput(0.5,-0.25){$s$}}
\newcommand{\val}[2]{#1\begin{pspicture}(12pt,9pt)\psline[unit=4pt,fillcolor=black](0,2)(1,0)(2,0)(3,2)\end{pspicture}#2}
\newcommand{\arc}[2]{#1\begin{pspicture}(12pt,9pt)\pscurve[unit=4pt,fillcolor=black](0.2,0)(1.5,1.5)(2.8,0)\end{pspicture}#2}
\newcommand{\fl}[1]{\left\lfloor #1\right \rfloor}
\newcommand{\ceil}[1]{\left \lceil #1 \right\rceil}

\def\hl{\underline{h}}
\def\hu{\overline{h}}

\def\cb{\color{blue}}
\def\cg{\color[rgb]{0.1, 0.5, 0.2} }
\def\cp{\color[rgb]{0.5,0.2,0.5} }
\def\myred{\color{red}}
\newrgbcolor{purple}{0.7 0.2 0.7}
\newrgbcolor{orange}{1.0 0.5 0.0}
\newrgbcolor{mygreen}{0.1 0.5 0.2}

\makeatletter
\def\imod#1{\allowbreak\mkern10mu({\operator@font mod}\,\,#1)}
\makeatother

\pagenumbering{arabic}
\pagestyle{headings}
\def\sof{\hfill\rule{2mm}{2mm}}
\def\llim{\lim_{n\rightarrow\infty}}
\date{\today}
\title{Circular Nim games}
\maketitle

\begin{center}
{\bf Matthieu Dufour}\\
{\it Dept. of Mathematics, Universit\'e du Qu\'ebec \`a Montr\'eal\\
Montr\'eal,  Qu\'ebec H3C 3P8, Canada}\\
{\tt dufour.matthieu@uqam.ca}\\
\vskip 10pt
{\bf Silvia Heubach}\\
{\it Dept. of Mathematics, California State University Los
Angeles\\
Los Angeles, CA 90032, USA}\\
{\tt sheubac@calstatela.edu}\\

\end{center}

\section*{Abstract}
A circular Nim game is a two player impartial combinatorial game consisting of  $n$ stacks of tokens placed in a circle. A move consists of choosing $k$ consecutive stacks, and taking at least one token from one or more of the $k$ stacks.  The last player able to make a move wins.  We prove results on the structure of the losing positions for small $n$ and $k$ and pose some open questions for further investigations.

\noindent{\bf Keywords}: Combinatorial games, Nim, winning strategy

\noindent{\bf 2010 Mathematics Subject Classification}: 91A46, 91A05
\thispagestyle{empty}
\section{Introduction}\label{Introduction}

We consider circular Nim games, one of the many variations of the game of Nim. The game of Nim consists of several stacks of tokens.  Two players alternate taking one or more tokens from one of the stacks, and the player who cannot make a move loses. Nim is an example of an {\em impartial combinatorial game}, that is, all possible moves and positions in the game are known (there is no randomness), and  both players have the same moves available from a given position (unlike in Chess). Nim plays a central role among impartial games as any such game is equivalent to a Nim heap (see for example~\cite[Corollary 7.8]{AlbNowWol2007}). Nim has been completely analyzed and a winning strategy consists of removing tokens from one stack such that the {\em digital sum} of the heights of all stacks becomes zero.  The digital sum of two or more  integers in base $10$ is computed by first converting the integers into base $2$, then adding the base $2$ values without carry over, and then translating back into base $10$. We denote the digital addition operator by  $\+$. For example, $3 \+ 6\+ 14= 11$. Note that the digital sum $a \+ a = 0$ for all values of $a$.

The variation of Nim that we will consider is to arrange the $n$ stacks of tokens of a Nim game in a circle. In addition, we allow the  players to take at least one token from one or more of $k$ {\underline {consecutive}} stacks (as order now matters, unlike in the game of Nim). More specifically, if $p_j$ is the number of tokens in stack $j$, and $a_j$ is the number of tokens the player selects from stack $j$, then a {\em legal move} consists of picking stacks $i, i+1,\ldots,i+k-1$ (modulo $n$) for some $i=1,\ldots, n-1$, and then selecting $0 \le a_j \le p_j$ tokens from stack $j=i, i+1,\ldots,i+k-1$ with $\sum_{j=i}^{i+k-1}a_j\ge 1$. We denote this game by $\c(n,k)$. A {\em position} in a circular Nim game can be represented by a vector $\p=(p_1,p_2,\ldots,p_n)$ of non-negative entries indicating the heights of the stacks in order around the circle or any of its {\em symmetries}, namely the set of vectors $$\{(p_{\ell},p_{\ell+1},\ldots,p_n,p_1,\ldots,p_{\ell-1})\mid 1\le \ell\le n\}\cup \{(p_{\ell-1},p_{\ell-2},\ldots,p_1,p_n,\ldots,p_{\ell})\mid 1\le \ell\le n\},$$  where the indices are modulo $n$. The {\em final position} of $\c(n,k)$ is given by $(0,0,\ldots,0)$. Figure~\ref{8-3} visualizes a position in a $\c(8,3)$ game together with a possible choice of three stacks to play on.

\begin{figure}[htp]
\begin{center}
\epsfxsize=100.0pt  \epsffile{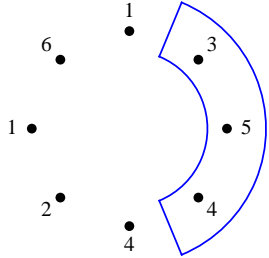}
\caption{The position $(1,3,5,4,4,2,1,6)$ in a $\c(8,3)$ game.}\label{8-3}
\end{center}
\end{figure}

We usually denote the current position in a game by $\p=(p_1,p_2,\ldots,p_n)$, and any position that can be reached by a legal move from $\p$ by $\p'=(p'_1,p_2',\ldots,p'_n)$. Such a position is called an {\em option} of $\p$, and we used the notation $\p \ra \p'$. We will also find it convenient in the proofs to use lowercase letters for the stack sizes to avoid the need for subscripts. In the same spirit of easy readability, we will refer to a stack by the number of its tokens, for example  as ``stack $a$'' or ``the $a$ stack'' instead of ``the third stack.'' If we need to make reference to a specific stack, we envision the first stack to be the one positioned at $12$ o'clock, and assume that the stacks are labeled in clockwise order. In Figure~\ref{8-3}, the third stack is a $5$ stack. In addition, we refer to the minimal value of $\p$ as $\min(\p)$, and the maximal value as $\max(\p)$, and to the vector $(1,1,\ldots,1)$ as $\onev$.

Usually, combinatorial games are studied from the standpoint of which player will win when playing from a given position. In this scenario, a position is either of type $\N$ or $\P$, where $\N$ indicates that the {\bf N}ext player to play from the current position has a winning strategy. The label $\P$ refers to the fact that the {\bf P}revious player, the one who made the move to the current position,  is the one to win (which means the player to play from the current position will lose no matter how s/he plays). We will take a slightly different (but equivalent) viewpoint, namely characterizing the position as either a winning or losing position for the player who goes next. Therefore, an $\N$ position is a winning position (as the next player wins), while a $\P$ position is a losing position. We will denote the set of winning and losing positions, respectively, as $\W$ and $\L$\footnote{In partizan games, $\L$ refers to the Left player.},  and characterize the set $\L$. For impartial games, the situation is remarkably simple.

\begin{theorem} \(see for example \cite[Theorem 2.11]{AlbNowWol2007}\) If $G$ is an impartial finite game, then for any position $\p$ of $G$, $\p \in \L$ or $\p \in \W$.
\end{theorem}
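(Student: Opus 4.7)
The plan is to prove the dichotomy by well-founded induction on the game's position graph, using the hypothesis that $G$ is finite (so every sequence of legal moves terminates). The recursive definition of $\L$ and $\W$ is: a position $\p$ belongs to $\L$ iff every option $\p'$ of $\p$ belongs to $\W$, and $\p$ belongs to $\W$ iff there exists at least one option $\p'$ of $\p$ that belongs to $\L$. What must be shown is that this recursion assigns exactly one label to each position.

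The base case consists of terminal positions, i.e., positions from which no legal move exists (for $\c(n,k)$ this is precisely $(0,0,\ldots,0)$). By the normal play convention the player to move from a terminal position loses, so the terminal position is placed in $\L$; it cannot also be in $\W$ because it has no option at all, let alone one in $\L$. Vacuously, the condition ``every option is in $\W$'' is satisfied, so the classification is consistent.

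For the inductive step, fix a non-terminal $\p$ and assume that every option $\p'$ of $\p$ has already been correctly classified as lying in exactly one of $\L$ or $\W$. Then exactly one of the following two mutually exclusive cases holds: either (i) at least one option of $\p$ lies in $\L$, in which case the next player wins by moving to that option, so $\p \in \W$; or (ii) every option of $\p$ lies in $\W$, in which case any move hands the opponent a winning position, so $\p \in \L$. In either case $\p$ receives exactly one label, completing the induction.

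The only subtle point, and the main thing one must justify, is that the induction is legitimate — that is, that ``induction along options'' is well-founded. This follows from the assumption that $G$ is finite: the directed graph whose vertices are positions and whose edges are legal moves has no infinite descending chain, because any play of $G$ terminates in finitely many steps. (For $\c(n,k)$ this is clear: each move strictly decreases $\sum_j p_j$, so the length of any play is bounded by the initial total.) Consequently, well-founded induction on the option relation applies, and the two cases above exhaustively and exclusively classify every position, giving $\p \in \L \cup \W$ with $\L \cap \W = \emptyset$.
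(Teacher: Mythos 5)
Your argument is correct. The paper does not prove this statement at all --- it is quoted as a known fact with a citation to Albert, Nowakowski and Wolfe --- so there is nothing in the text to compare against; your proof is the standard one, namely well-founded (backward) induction on the option relation, placing terminal positions in $\L$ and then classifying each non-terminal $\p$ according to whether some option lies in $\L$ or all options lie in $\W$. The one point worth keeping explicit, which you do handle, is that the induction simultaneously verifies that the recursive labels agree with the strategic meaning of $\L$ and $\W$ (the player moving to an $\L$ option wins; a player facing only $\W$ options loses), and that finiteness of $G$ is what makes the induction legitimate.
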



With this result,  determining either the set of winning or losing positions completely answers the question of whether the first or the second player has a winning strategy.  If we are discussing several games at the same time, then we will indicate the relevant game as a subscript for the set of losing positions, for example $\L_G$. Another well-known theorem will be crucial for the determination of the set of losing positions. 

\begin{theorem} \label{lose}\(see for example~\cite[Theorem 2.12]{AlbNowWol2007}\) Suppose the positions of a finite impartial game can be partitioned into mutually exclusive sets $A$ and $B$ with the properties:
\begin{itemize}
\item[ {\rm (I)}] every option of a position in $A$ is in $B$;
\item[{\rm (II)}] every position in $B$ has at least one option in $A$; and
\item[ {\rm (III)}] the final positions are in $A$.
\end{itemize}
Then $A=\L$ and $B=\W$.

\end{theorem}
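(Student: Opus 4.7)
The plan is to prove the two equalities $A = \L$ and $B = \W$ simultaneously by induction on the \emph{game length} of a position $\p$, that is, the length of a longest play from $\p$ to a final position. Since the game is finite, this quantity is well defined and takes values in $\NN$, so strong induction on it is legitimate. Theorem~1.1 guarantees that every position is either in $\L$ or in $\W$, so it will be enough to show the containments $A \subseteq \L$ and $B \subseteq \W$; the reverse containments then follow because $\{A, B\}$ and $\{\L, \W\}$ are both partitions of the position set.

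For the base case, a position of game length $0$ is a final position, hence lies in $A$ by (III). Such a position has no options, so the next player cannot move and therefore loses by the rules, placing it in $\L$. For the inductive step, fix $\p$ with game length $\ge 1$ and assume that every position reachable from $\p$ satisfies the claimed classification. If $\p \in A$, property (I) says every option of $\p$ lies in $B$, and by the inductive hypothesis every such option lies in $\W$; hence whatever move the next player makes hands the opponent a winning position, so $\p \in \L$. If instead $\p \in B$, property (II) produces some option $\p' \in A$, and by induction $\p' \in \L$; the next player moves to $\p'$ and thereby wins, so $\p \in \W$. This completes the induction.

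The argument is essentially the standard backward-induction proof for finite impartial games, and I do not expect a genuinely hard step, but two minor points deserve care. First, one must verify that the notion of game length is well defined in $\c(n,k)$; this follows because every move strictly decreases the total number of tokens $\sum p_i$, so the game length of $\p$ is bounded by $\sum p_i$. Second, one should double-check that the passage ``every option leads to $\W$, hence $\p \in \L$'' uses only the definitions of $\L$ and $\W$ together with Theorem~1.1, and not any circular reference to $A$ or $B$; separating the induction cleanly from the hypothesis that $A, B$ form a partition is the only place where one could slip.
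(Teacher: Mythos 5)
Your proof is correct. The paper does not actually prove this theorem---it is quoted from the literature (Albert--Nowakowski--Wolfe, Theorem 2.12)---and your backward induction on game length is precisely the standard argument given there: base case via (III), the $A\subseteq\L$ and $B\subseteq\W$ containments via (I) and (II) respectively, and equality from the fact that both $\{A,B\}$ and $\{\L,\W\}$ partition the position set. The only cosmetic remark is that well-definedness of game length should be argued from finiteness of the game in general (no infinite play), rather than from the token count of $\c(n,k)$, since the theorem is stated for arbitrary finite impartial games.
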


Theorem~\ref{lose} tells us how to determine the set of losing positions. First we need to obtain a candidate set $S$ for the set of losing positions $\L$. Such a set $S$ may suggest itself when we examine patterns in the output of a computer program that determines the losing positions by recursively computing the Grundy function for each position. Once we have a candidate set, then we need to show that  any move from a position $\p \in S$ leads to a position $\p' \notin S$ (condition {\rm (I)}), and that for every position $\p \notin S$, there is a move that leads to a position $\p' \in S$ (condition  {\rm (II)}). Since $(0,0,\ldots,0)$ is the only final position, it is easy to see that condition {\rm (III)} is satisfied in all the proofs we give. Thus, showing that conditions {\rm (I)} and   {\rm (II)} are satisfied yields $S = \L$. Generally, it is relatively easy to show condition {\rm (I)}, while it may be quite difficult to show condition {\rm (II)}.

\section{The easy cases}\label{S:easy}

We first state a few easy general results.
\begin{theorem}\label{T:easy} \hfill
\begin{itemize}
\item[{\rm(1)}] The game  $\c(n,1)$ reduces to Nim, for which the set of losing positions is given by $\L=\{(p_1,p_2,\ldots,p_n)\mid p_1\+ p_2 \+ \cdots \+ p_n=0\}$.
\item[{\rm(2)}]  The game $\c(n,n)$ has a single losing position, namely $\L = \{(0,0,\ldots,0)\}$.
\item[{\rm(3)}]  The game $\c(n,n-1)$ has losing positions $\L = \{(a,a,\ldots,a)\mid a \ge 0\}$.
\end{itemize}
\end{theorem}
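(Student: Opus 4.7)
The plan is to apply Theorem~\ref{lose} separately to each of the three cases, producing a candidate set $S$ and verifying conditions (I) and (II). Condition (III) is immediate in all three parts since $(0,0,\ldots,0)$ lies in each candidate set.

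For part (1), the key observation is that a move in $\c(n,1)$ consists of choosing a single stack and removing at least one token from it, which is precisely the move rule of ordinary Nim. The result then follows from the classical Nim theorem; alternatively, one can verify conditions (I) and (II) directly using the identity $a\+a=0$ and the standard argument that picks a stack whose leading bit matches the leading bit of $p_1\+\cdots\+p_n$.

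For part (2), in $\c(n,n)$ the only legal choice is to act on all $n$ stacks simultaneously. Take $S=\{(0,\ldots,0)\}$. Condition (I) is vacuous since $(0,\ldots,0)$ has no options, and condition (II) follows from the fact that from any nonzero $\p$ the move which empties every stack is legal and lands in $S$.

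For part (3), set $S=\{(a,a,\ldots,a)\mid a\ge 0\}$. For condition (I), any move in $\c(n,n-1)$ leaves exactly one stack untouched; starting from $(a,\ldots,a)$ the untouched stack retains value $a$ while at least one of the other $n-1$ consecutive stacks ends strictly below $a$, so the resulting position cannot be constant. For condition (II), suppose $\p\notin S$ and let $m=\min(\p)$. Choose any index $j$ with $p_j=m$. The remaining $n-1$ stacks are consecutive around the circle, so the move that reduces each of them to $m$ is legal: every reduction is nonnegative because $p_i\ge m$ for all $i$, and the total is strictly positive because $\p$ is not constantly $m$. The resulting position is $(m,\ldots,m)\in S$.

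The only substantive step is condition (II) in part (3); the driving idea is that in $\c(n,n-1)$ the excluded stack is a single stack, so one can always ``cut out'' a stack realizing the minimum and level the remaining $n-1$ consecutive stacks down to its height. All other verifications fall out directly from the move rules and the structure of the candidate sets.
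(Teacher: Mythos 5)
Your proof is correct and follows essentially the same route as the paper: part (1) by reduction to classical Nim, part (2) by emptying all stacks, and part (3) by leveling the $n-1$ stacks other than a minimal one down to the minimum. The only difference is that you spell out the legality check in part (3) slightly more explicitly, which the paper leaves implicit.
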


\begin{proof} {\rm(1)} This result can be found for example in the original analysis of Nim by Bouton~\cite{Bou1901}, in~\cite[Theorem 7.12]{AlbNowWol2007}, or the bible for combinatorial games~\cite{BerConGuy1}.  \\
{\rm(2)} In this game, the player playing from a position $\p \ne (0,0,\ldots,0)$ can always take all tokens from all stacks.\\
{\rm(3)} Let $S= \{(a,a,\ldots,a)\mid a \ge 0\}$.  Starting from a position $\p=(a,a,\ldots,a) \in S$,  at least one token has to be removed, so w.l.o.g., removal occurs at stack $1$, and the play is on stacks  $1, \ldots, n-1$. Thus, if the position after the play is $\p'=(p_1',p_2',\ldots, p_n')$, we have that $p_1'<a=p_n'$, and therefore, $\p' \notin S$, satisfying condition {\rm (I)}. On the other hand, from any position $\p \notin S$, we can reach a position in $S$ by finding the stack with the least number of tokens, and reducing the number of tokens in the $n-1$ other stacks to that minimal number of tokens, resulting in a position $\p'$ where all stacks have the same height, that is, $\p'\in S$. Thus, $S$ satisfies condition {\rm (II)}, which completes the proof.
\end{proof}

Note that Theorem~\ref{T:easy} completely covers the games $\c(n,k)$ for $n = 1, 2, 3$. For $n = 4$, the only game not covered is $\c(4,2)$.

\begin{theorem}\label{L4-2} For the game $\c(4,2)$, the set of losing positions is  $\L=\{(a,b,a,b)\mid a, b \ge 0\}$.
\end{theorem}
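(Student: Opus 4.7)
The plan is to apply Theorem~\ref{lose} with $A = S := \{(a,b,a,b) \mid a,b \ge 0\}$ (interpreted modulo the symmetries of the circle) and $B$ its complement. Condition~(III) is immediate since $(0,0,0,0) = (a,b,a,b)$ for $a=b=0$. The work is entirely in conditions (I) and (II), and since $n=4$, every case is finite and elementary. The first observation I would record is that a position $(p_1,p_2,p_3,p_4)$ lies in $S$ (up to symmetry) if and only if $p_1 = p_3$ and $p_2 = p_4$: both cyclic rotation by one and reflection preserve this ``alternating'' property.

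For condition (I), I would start with $\p = (a,b,a,b) \in S$ and consider a legal move on each of the four choices of two consecutive stacks. For instance, playing on stacks $1,2$ produces $(a',b',a,b)$; for this to remain in $S$ we would need $a'=a$ and $b'=b$, i.e.\ no token removed, contradicting the definition of a legal move. The other three choices of adjacent pair are handled identically (the roles of the two equal pairs get swapped, but in every case the move forces us to alter one entry of one of the equal pairs while leaving its partner untouched). Hence every option leaves $S$.

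For condition (II), suppose $\p=(p_1,p_2,p_3,p_4) \notin S$, so $p_1 \ne p_3$ or $p_2 \ne p_4$. The key observation is that the four choices of adjacent pair correspond to four sufficient inequality conditions for reaching $S$ in one move:
\begin{itemize}
\item stacks $1,2$: requires $p_1 \ge p_3$ and $p_2 \ge p_4$, move to $(p_3,p_4,p_3,p_4)$;
\item stacks $2,3$: requires $p_3 \ge p_1$ and $p_2 \ge p_4$, move to $(p_1,p_4,p_1,p_4)$;
\item stacks $3,4$: requires $p_3 \ge p_1$ and $p_4 \ge p_2$, move to $(p_1,p_2,p_1,p_2)$;
\item stacks $4,1$: requires $p_1 \ge p_3$ and $p_4 \ge p_2$, move to $(p_3,p_2,p_3,p_2)$.
\end{itemize}
For any quadruple of nonnegative integers, exactly one of the sign patterns of $(p_1-p_3,\ p_4-p_2)$ occurs, so at least one of these four moves is available; and because $\p \notin S$, at least one of the strict inequalities holds, guaranteeing that the move removes at least one token.

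The proof has no real obstacle; the main thing to be careful about is handling the symmetry class of $S$ correctly (so that one does not miss the reflected/rotated representatives) and organizing the four cases in (II) by the sign pattern of $(p_1-p_3, p_4-p_2)$ so that each quadruple is covered by some choice of adjacent pair.
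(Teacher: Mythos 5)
Your proof is correct and follows essentially the same route as the paper: condition (I) by noting a move on two adjacent stacks breaks one of the equalities $p_1=p_3$, $p_2=p_4$ while leaving its partner fixed, and condition (II) by reducing the larger stack of each diagonal pair to the smaller, which is always achievable with one adjacent pair (your four sign-pattern cases just make the paper's ``any one stack is adjacent to both stacks of the other diagonal pair'' explicit). The only nitpick is that ``exactly one'' sign pattern should read ``at least one'' when ties occur, but your subsequent sentence already states the claim correctly.
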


\begin{proof} Again we follow the directions of Theorem~\ref{lose} to determine the set $\L$. Let $S=\{(a,b,a,b)\mid a, b \ge 0\}$ and imagine the four stacks to be located at the corners of a square. For any position $\p=(p_1,p_2,p_3,p_4)=(a,b,a,b) \in S$, diagonally opposite stacks of the square have the same number of tokens. Any play on either one or two adjacent stacks affects only one stack of the diagonally opposite pair. Assuming  w.l.o.g. that the play is on stacks $1$ and $2$,  we have that $p'_1<p_1=p_3=p'_3$, and $p'_2\le p_2=p_4=p'_4$. Thus, $\p' \notin S$, and condition {\rm (I)} holds. On the other hand,  starting from any position $\p \notin S$, we determine the minimal value of each diagonal pair of stacks and reduce the stack with the larger number of tokens to the smaller value. This is always possible as any one stack is adjacent to both stacks of the other diagonal pair, so condition {\rm (II)} is satisfied. For example, for $\p=(3, 5, 4, 2) \notin \L$, reduce the second stack by three tokens and the third stack by one token to arrive at position $\p'=(3, 2, 3, 2)\in \L$.
\end{proof}

\section{Harder results }\label{main}

For $n=5$, the cases not covered by Theorem~\ref{T:easy} are $\c(5,2)$ and $\c(5,3)$. The result for $\c(5,2)$ was obtained by Dufour in his thesis~\cite{Duf}, and independently, by Ehrenborg and Steingr{\'{\i}}msson~\cite{EhrSte1996} as a special case of Nim played on a simplicial complex. {The results by Ehrenborg and Steingr{\'{\i}}msson  depend on the ability to explicitly obtain the circuits (see Definition~\ref{simp comp})   of the {\em cycle complex $C_{n,k}$}, which is possible only for small values of $n$ and $k$.} We will give elementary proofs of these results that do not rely on the framework of simplicial complexes.

\begin{theorem} \label{L5} \(see~\cite[Propositions 8.3 and 8.4]{EhrSte1996} and~\cite[Theorem 6.2.1]{Duf}\)
\begin{itemize}
\item[{\rm(1)}] The game  $\c(5,2)$ has losing positions $\L=\{(a^*,b,c,d,b)\mid a^*+b=c+d \mbox{ and } a^*=\max(\p)\}$.
\item[{\rm(2)}]  The game $\c(5,3)$ has losing positions $\L=\{(0,b,c,d,b)\mid b=c+d\}$.
\end{itemize}
\end{theorem}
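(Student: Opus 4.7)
The plan is to apply Theorem~\ref{lose} to each of the two candidate sets
$S_1=\{(a^*,b,c,d,b)\mid a^*+b=c+d,\ a^*=\max(\p)\}$ for part {\rm (1)} and
$S_2=\{(0,b,c,d,b)\mid b=c+d\}$ for part {\rm (2)}, with membership understood up to the dihedral symmetries of the $5$-cycle. Condition {\rm (III)} holds because $(0,0,0,0,0)$ lies in both $S_1$ and $S_2$. In both parts I would fix $\p$ in canonical form and use the reflection symmetry through the distinguished stack (the $a^*$ stack in part {\rm (1)}, the $0$ stack in part {\rm (2)}) to cut the inequivalent move types down to three: for $\c(5,2)$, plays on $\{1,2\}$, $\{2,3\}$, and $\{3,4\}$; for $\c(5,3)$, plays on $\{5,1,2\}$, $\{1,2,3\}$, and $\{2,3,4\}$.

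For condition {\rm (I)} in both parts, I would take each of the three move classes, write the resulting $\p'$ explicitly, suppose $\p'\in S_i$, and run through the possible canonical orientations of $\p'$ (i.e., which stack of $\p'$ plays the role of the distinguished position). In each sub-case, the untouched stacks of $\p$, together with the equality and sum constraints defining $S_i$, over-determine $\p'$; either the constraints reduce to the original identity on $\p$ (so $\p'=\p$, contradicting the removal of at least one token), or they force one of the stack values to be negative or to exceed its bound in $\p$. The verification is mechanical but requires enumerating the placements of the distinguished slot among the five positions of $\p'$.

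For condition {\rm (II)} in part {\rm (1)}, given $\p\notin S_1$, I would search for a move to $S_1$ by fixing each of the five canonical orientations of $\p'$ in turn; in each orientation, the three untouched stacks of a chosen move pin down three of the five slots of $\p'$, and the sum equation $a^{*\prime}+b'=c'+d'$ then determines the remaining two values. The key insight is that the new maximum $a^{*\prime}$ need not sit at the same index as $\max(\p)$: sometimes $a^{*\prime}$ must be placed at a stack whose value in $\p$ is not maximal, with the move shrinking the original maximum down into a $b'$, $c'$, or $d'$ slot (for instance, from $(3,2,1,1,2)$, the move on $\{1,2\}$ yielding $(1,2,1,1,2)\in S_1$ uses position~$5$ rather than position~$1$ as the new $a^*$-slot). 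A finite case analysis on which of the three move classes produces a feasible assignment --- respecting $p_j'\le p_j$, non-negativity, the sum equation, and the max condition --- yields the required move in every case.

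For condition {\rm (II)} in part {\rm (2)}, I would split on whether $\p$ has a zero stack. If $p_j=0$ (rotate so $j=1$), then at least one of the three moves on $\{5,1,2\}$, $\{1,2,3\}$, or $\{3,4,5\}$ lands in $S_2$ with the zero kept at position $1$: set $b=\min(p_2,p_5,p_3+p_4)$ and let the chosen move adjust the neighbors $p_2,p_5$ and/or the far pair $(p_3,p_4)$ so that the canonical form $(0,b,c',d',b)$ with $c'+d'=b$ is reached. If $\p$ has no zero stack, the primary move is on $\{j-1,j,j+1\}$ with $p_j$ reduced to $0$ and both neighbors reduced to the common value $p_{j+2}+p_{j+3}$; this is legal exactly when $p_{j-1},p_{j+1}\ge p_{j+2}+p_{j+3}$. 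The main obstacle is the residual configurations (such as $\p=(1,1,1,1,1)$) for which no index $j$ satisfies that inequality; there, I would show that placing the zero at an endpoint of the played triple, so that $\p'$ inherits its common neighbor value $b$ from the untouched side, always succeeds, the existence of a workable endpoint following from a pigeonhole comparison of the five pairwise sums $p_i+p_{i+1}$ against the individual $p_j$ around the cycle.
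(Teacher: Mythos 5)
Your overall architecture (Theorem~\ref{lose}, a symmetry reduction of the move classes, and separate treatment of the two sets) matches the paper's, and your observation for condition {\rm (I)} that one must consider every possible placement of the distinguished slot in $\p'$ --- not just the orientation inherited from $\p$ --- is a genuine point of care that the paper's own argument glosses over. Your zero-stack construction for $\c(5,3)$ (take $b=\min(p_2,p_5,p_3+p_4)$ and note that the group attaining this minimum is left untouched, so the changed stacks always fit inside three consecutive positions) is correct and essentially complete as stated.

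The gap is in condition {\rm (II)}, which is the hard half of the theorem. For $\c(5,2)$ you reduce the problem to ``a finite case analysis \dots yields the required move in every case,'' and for the zero-free positions of $\c(5,3)$ to ``a pigeonhole comparison of the five pairwise sums $p_i+p_{i+1}$ against the individual $p_j$.'' Neither is carried out, and neither is actually a finite check: what is finite is the set of (move class, orientation) pairs, but each such pair yields a system of inequalities in the five stack heights, and the content of condition {\rm (II)} is precisely the claim that the union of these feasibility regions covers every $\p\notin S$. That covering argument is where all the work lies --- the paper needs four sub-cases with explicit move formulas just for the adjacent max--min configuration of $\c(5,3)$ --- and asserting that the search ``yields the required move in every case'' does not prove it. You also miss the normalization the paper uses for $\c(5,2)$: $S_1$ is invariant under $\p\mapsto\p+m\cdot\onev$, so one may assume $\min(\p)=0$, after which a two-case split (maximum and minimum adjacent, or separated by one stack) with explicit target positions finishes the argument; without some such reduction your case analysis carries an extra free parameter. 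To close the gap you must either exhibit explicit target positions with verified legality conditions in each case, as the paper does, or state and prove the precise covering/pigeonhole lemma you are invoking.
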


Note that the conditions for $\c(5,2)$ force $b$ to be the minimal value, while the conditions for $\c(5,3)$ force $b$ to be maximal. Figure~\ref{n5} gives a visualization of the two sets of losing positions.

\begin{figure}[htp]
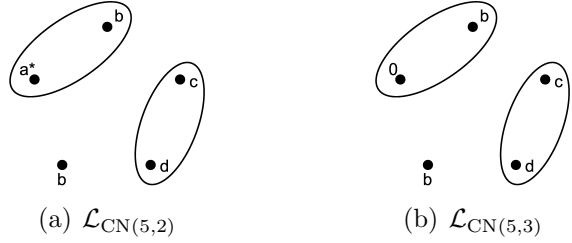

  \begin{center}
    \subfigure[$ \L_{\c(5,2)}$]{\label{fig:(5,2)}\includegraphics[scale=0.45]{5_2.eps}} \hspace{2cm}
    \subfigure[$\L_{\c(5,3)}$]{\label{fig:(5,3)}\includegraphics[scale=0.45]{5_3.eps}}   \end{center}
  \caption{Losing positions for $n=5$.}
  \label{n5}
\end{figure}

\begin{proof} 
{\rm(1)} Let $\p=(a^*,b,c,d,b)$ where $a^*+b=c+d$.  Play cannot be on a single stack, as it would destroy either the equality of the $b$ stacks, or the condition on the equality of the sums. Play on two stacks cannot include any of the $b$ stacks (as they cannot both be played), so the only choice is to play on $c$ and $d$, which results in $c'+d' < c+d=a^*+b$, violating the equality of sums. Thus, any move from $\p\in S$ will lead to a position $\p' \notin S$, and therefore, (I) holds. 

To show that we can move from any position $\p \notin S$ to a position $\p' \in S$, first note that $\p \in S\Leftrightarrow \p+m\cdot \onev \in S$ since the equality of the two sums and the equality of the $b$ stacks are not affected when a fixed amount is added or subtracted from each stack. Thus we may assume that $\min(\p)=0$.  We consider  two cases:
\begin{itemize}
\item[{\rm(i)}]  maximal and minimal value are adjacent; w.l.o.g.,  $\p=(0,w,x,y,z)$ and $w\ge x,  y, z$. If $w \ge z+y$, then $\p \ra (0,z+y,0,y,z) \in S$ is a legal move. For $w<z+y$, $\p \ra (0,w,0,w-z,z) \in S$ is a legal move. For example, $(0,6,4,3,2) \ra (0,5,0,3,2)$ and $(0,6,4,3,5) \ra (0,6,0,1,5)$;
\item[{\rm(ii)}] maximal and minimal values are separated by one stack; w.l.o.g., $\p=(0,x+y,w,z,y)$, and $\max(\p)\in\{w,z\}$.  If $z \ge x$, then  $\p\ra (0,x+y,0,x,y) \in S$ is a legal move. Otherwise $\p\ra (0,z+y,0,z,y) \in S$ is a legal move. For example, $(0,5,6,3,4)\ra(0,5,0,1,4)$, and $(0,5,6,1,3)\ra(0,4,0,1,3)$.
\end{itemize}
This completes the proof that $S=\L$ for $\c(5,2)$.\\
{\rm(2)} Now we look at the case $\c(5,3)$ and rewrite the structure of the losing positions, letting $S=\{(0,a+b,a,b,a+b)\}$. Now we are allowed to play on three stacks. If play involves either the $a$ or $b$ stack, then both $a+b$ stacks have to change, which would mean play on four stacks, which is not allowed. If  the play is on the other three stacks, then we have to reduce both $a+b$ stacks by the same amount, but their height no longer is the sum of the height of the $a$ and $b$ stacks, so condition (I) holds. To show the validity of condition (II), we let $\min(\p)=m$ and $\max(\p)=M$, and again consider the two cases where $\min(\p)$ and $\max(\p)$ are either adjacent or one stack apart.
\begin{itemize}
\item[{\rm(i)}] max($\p$) and min($\p$) are adjacent, w.l.o.g.,  $\p=(m,M,x,y,z)$. We display the different cases and examples of moves in a table, with  stacks that remain fixed underlined:\\
\begin{center}
\begin{tabular}{c|c|c}\hline
Case & $\p'$ & Example\\ \hline
$y-z \ge m$ & $(\underline{m},m+z,0,m+z,\underline{z})$ & $(3,9,5,7,4) \ra (3,7,0,7,4)$ \\ \hline
$0 \le y-z<m$ & $(y-z,y,0,\underline{y},\underline{z})$ & $(3,9,5,6,4) \ra (2,6,0,6,4)$ \\ \hline
$y-z <0 \wedge x > z-y$  &$(0,z,z-y,\underline{y},\underline{z})$ & $(3,6,4,3,5) \ra (0,5,2,3,5)$ \\ \hline
$y-z <0 \wedge x \le z-y$  &$(0,x+y,\underline{x},\underline{y},x+y)$ & $(3,6,1,3,5) \ra (0,4,1,3,4)$ \\ \hline
\end{tabular}
\end{center}

\vspace{0.2in}

\item[{\rm(ii)}] max($\p$) and min($\p$) are separated by one stack; w.l.o.g., $\p=(m,x,M,y,z)$.  If $x \ge z-m$, then  $\p\ra (m,z-m,z,0,z) \in S$ is a legal move. Otherwise $\p\ra (m,x,x+m,0,x+m) \in S$ is a legal move. For example, $(2,5,8,7,3)\ra(2,1,3,0,3)$ and $(2,3,8,7,3)\ra(2,3,5,0,5)$.
\end{itemize}
This completes the proof that $S=\L$ for $\c(5,3)$.
\end{proof}

Now we turn to results for $n=6$.  Figure~\ref{n6-3} visualizes the set of losing configurations.

\begin{theorem} \label{L6-3}
For the game $\c(6,3)$,  the set of losing positions is given by  $\L=\{(a,b,c,d,e,f)\mid a+b = d+e \text{ and } b+c = e+f\}$.\footnote{This result has also recently been discovered  independently and appeared  in {\cite[Example 23]{Hor2010}}. Once more, we provide an elementary proof that does not rely on the framework of simplicial complexes.}
\end{theorem}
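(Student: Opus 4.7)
My plan is to apply Theorem~\ref{lose} with $A = S := \{(a,b,c,d,e,f)\mid a+b = d+e,\ b+c = e+f\}$. A useful preliminary observation is that subtracting the two defining equations of $S$ gives $a+f = c+d$, so writing $s_i := p_i + p_{i+1\bmod 6}$ for the six consecutive-pair sums, a position is in $S$ if and only if the three ``opposite equalities'' $s_1=s_4$, $s_2=s_5$, $s_3=s_6$ all hold. This symmetric triple of equalities is the invariant to preserve or restore.

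For condition (I), by the rotational symmetry of the game and of $S$ I may assume that play is on stacks $1,2,3$, producing new values $a'\le a$, $b'\le b$, $c'\le c$ with at least one strict inequality. Since $p_4, p_5, p_6$ are unchanged and $\p \in S$, to have $\p' \in S$ one would need $a'+b' = a+b$ and $b'+c' = b+c$. The first equation together with $a'\le a$ and $b'\le b$ forces $(a',b') = (a,b)$, and then the second forces $c'=c$, contradicting that a token was actually removed.

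For condition (II), I parameterize the restoration moves. A play on stacks $1,2,3$ leaves $s_4, s_5$ unchanged, and the third desired equality $s_3 = s_6$ becomes an algebraic consequence once $s_1' = s_4$ and $s_2' = s_5$ are enforced; so setting $b' = t$, one is forced to take $a' = d+e-t$ and $c' = e+f-t$. The move is legal exactly when
\[t \;\in\; I_1 := \bigl[\max(0,\, d+e-a,\, e+f-c),\ \min(b,\, d+e,\, e+f)\bigr].\]
After discarding automatic comparisons, $I_1 \ne \emptyset$ reduces to the four inequalities $s_1 \ge s_4$, $s_2 \ge s_5$, $d-f \le a$, and $f-d \le c$. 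The five other rotations of the triple yield analogous intervals $I_2,\dots,I_6$, and condition (II) amounts to showing that for every $\p \notin S$ at least one of these six intervals is non-empty.

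This last covering argument is the main obstacle. Using the algebraic identity $u_1+u_3 = u_2$ for $u_i := s_i - s_{i+3}$, together with the rotational and reflective symmetries of the game which permute $(u_1, u_2, u_3)$ up to sign, I will reduce the sign patterns of the $u_i$'s to a small number of representative cases. In each case the ``$s$-comparisons'' single out one or two candidate rotations; the delicate step is verifying the accompanying ``diagonal'' inequalities such as $d-f \le a$. When such a diagonal inequality fails, I expect to switch to an adjacent rotation whose diagonal involves a different stack large enough to absorb the gap, and to complete the argument by a finite check over the remaining sign patterns.
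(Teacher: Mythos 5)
Your argument for condition (I) is correct and matches the paper's: a play on three consecutive stacks leaves one of the two defining pair-sums with only decreases on its side, forcing equality to break. Your setup for condition (II) is also sound as far as it goes --- the observation that $s_3'=s_6'$ follows automatically once $s_1'=s_4$ and $s_2'=s_5$ are enforced is right, and the interval $I_1=[\max(0,\,d+e-a,\,e+f-c),\,\min(b,\,d+e,\,e+f)]$ correctly captures the legal restoring moves on stacks $1,2,3$.

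However, the proof has a genuine gap: the covering claim --- that for every $\p\notin S$ at least one of the six intervals $I_1,\dots,I_6$ is non-empty --- is the entire content of condition (II), and you do not prove it. You describe a plan (``I will reduce the sign patterns\dots I expect to switch to an adjacent rotation\dots complete the argument by a finite check''), but none of the cases are actually verified, and the step you yourself flag as delicate is exactly where the difficulty lives. For instance, for $\p=(0,10,0,0,5,5)$ one has $s_1\ge s_4$ and $s_2\ge s_5$, yet $I_1=\emptyset$ because the diagonal inequality $f-d\le c$ fails ($5\not\le 0$); so the sign pattern of the $u_i$ alone does not select a working rotation, and the passage to ``an adjacent rotation whose diagonal involves a different stack large enough to absorb the gap'' must be argued, not asserted. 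The paper closes this gap concretely: it normalizes by translation invariance so that $f=0$, uses the circular symmetry to assume $a+b\ge d+e$, and then exhibits an explicit legal move into $S$ in each of three exhaustive cases ($b>e$; $b\le e$ and $c\ge e-b$; $b\le e$ and $c<e-b$), the last of which plays on stacks $e,f,a$ rather than on $a,b,c$ --- precisely the rotation switch your sketch leaves open. Until you carry out your finite check over the sign patterns (or an equivalent explicit case analysis), condition (II) is not established.
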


\begin{figure}[htp]
\begin{center}
\epsfxsize=70.0pt  \epsffile{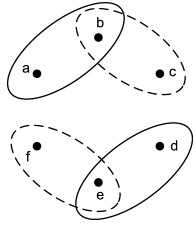} 
\caption{Losing positions for $\c(6,3)$.}\label{n6-3}
\end{center}
\end{figure}

\begin{remark} Note that for positions in the losing set given in Theorem~\ref{L6-3}, two pairs of opposite stacks have equal sums. However, having two sets of opposite pairs with the same sum also forces the third set of opposite pairs to have equal sums. Thus when proving results about the losing set,  we are done as soon as we have shown that any two sets of opposite pairs have the same sum. This will come in handy in the proof that follows. Alternatively, the symmetries indicate that it does not matter which two sets of opposite pairs have the same sum.
\end{remark}

\begin{proof}
Let $S=\{(a,b,c,d,e,f)\mid a+b = d+e \text{ and } b+c = e+f\}$. Suppose that $\p \in S$, and w.o.l.g, the move is made on the three consecutive stacks $a$, $b$ and $c$, so  $\p=(a,b,c,d,e,f)\ra \p'=(a', b', c', d,e, f)$. At least one token is removed, so w.l.o.g. suppose $a'<a$. Then $a'+b'<a+b =d+e$, so  $\p'\notin S$ and (I) holds. To show  condition (II), assume that $\p \notin S$ and observe that if there is a legal move $\p  \ra \p'$, then  there is a legal move from $\p+\ell\cdot \onev \ra \p'+\ell\cdot \onev$, for any positive integer value of $\ell$.  Therefore, we can assume w.l.o.g.  that $f=0$. Also, due to the circular symmetries, one can assume that $a+b \ge d+e$ (*). Three cases need to be considered: 
\begin{itemize}
\item[\rm{(i)}] $b > e$: Play is on stacks $b$ and $c$ and on either  stack $a$ or $d$, depending on which value is bigger; $\p \ra (\min(a,d),e,0,\min(a,d),e,0) \in S$ is a legal move. For example, $(5, 10, 8, 6, 9, 0) \ra (5, 9, 0, 5, 9, 0)$;

\item[\rm{(ii)}]  $b \le e \wedge c \ge e-b$:
We play on  stacks $a$, $b$, and $c$. Condition (*) guarantees that $a\ge d+e-b$, and thus $\p \ra (d+e-b,b,e-b,d,e,0)\in S$ is a legal move.  For example, $(10, 8, 8, 4, 9, 0) \ra (5, 8, 1, 4, 9, 0)$. (Note that if $a+b=d+e$ and $c=e-b$, then $\p \in S$, a contradiction.)

\item[\rm{(iii)}]  $b \le e \wedge c < e-b$:
 In this case, we play on stacks $e$, $f$, and  $a$. Since 
$a \ge d + e - b > d + c$, $\p \ra (c+d,b,c,d,b+c,0)\in S$ is a legal move. 
For example,  $(10, 8, 5, 2, 14, 0) \ra(7, 8, 5, 2, 13, 0)$.

\end{itemize}
In all cases, we can move from any $\p \notin S$ to $\p' \in S$, thus condition (II) holds and therefore $S=\L$.
\end{proof}

 We will discuss in Section~\ref{GenRes} why the proof given in \cite{Hor2010} does not extend to other cases.

\begin{remark} The proof of Theorem~\ref{L6-3} illustrates just one way of making a move from a position not in $\L$ to a position in $\L$. In general, this move is not unique. For example, for $\p=(a, b, c ,d, e, f) \notin \L$, $\p'=(a'+ \ell, b' - \ell, c' + \ell, d, e, f)\in \L$  for all the values of $\ell$ that preserve the legality of the move, that is, $a' + \ell \le a$, $ c' + \ell \le c$, and  $b' - \ell\ge 0$. As an illustration, from the position $\p=(10, 9, 5, 8, 4, 3)$, one can move to the positions $\p'= (5 + \ell, 7 - \ell, 0 + \ell, 8, 4, 3)\in \L$ for $\ell = 0 , 1, 2, \ldots, 5$.
\end{remark} 

We next present the result for the game $\c(6,4)$.  Figure~\ref{n6-4} visualizes the set of losing positions,  which are very similar to those in the game $\c(6,3)$, with additional properties involving a digital sum.

\begin{theorem} \label{L6-4}
For the game $\c(6,4)$,  the set of losing positions is given by  $$\L=\{(a,b,c,d,e,f)\mid a+b = d+e, b+c = e+f, a\+c\+e=0, \mbox{ and } a=\min(\p)\}.$$\end{theorem}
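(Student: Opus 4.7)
We apply Theorem~\ref{lose} with $A = S$, the set claimed in the statement; condition (III) is trivial since $(0,\ldots,0)\in S$. The two linear equations $a+b=d+e$ and $b+c=e+f$ imply the third, $c+d=f+a$, so they say precisely that all three pairs of opposite hexagon-edges have equal sums; this is invariant under the full dihedral symmetry and must hold for every representation of any $\p'\in\L$.

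\textbf{Condition (I).} Let $\p=(a,b,c,d,e,f)\in S$ and suppose a legal move produces $\p'$; by circular symmetry it suffices to treat a move on the stacks at positions $1$--$4$, leaving $e,f$ fixed. If $\p'\in\L$ then $\p'$ satisfies the linear equations, so $b'+c'=e+f=b+c$ together with $b'\le b$, $c'\le c$ forces $b'=b$ and $c'=c$; and $a'+b'=d'+e$ combined with $a+b=d+e$ forces $a-a'=d-d'=:t$, with $t\ge 1$ since at least one token is removed. The natural representation of $\p'$ then has alternating XOR $(a-t)\+c\+e=(a-t)\+a\ne 0$, using $a\+c\+e=0$. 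Since $a=\min(\p)$, we have $\min(\p')=a-t$, occurring only at position $1$ (and also at position $4$ if $d=a$, which via the linear equations forces $\p$ to be period-$3$ of shape $(a,b,c,a,b,c)$). Each representation of $\p'$ whose first coordinate equals $\min(\p')$ --- there are at most four of them, counting rotations and the analogous reflections --- has alternating XOR equal to $(a-t)\+a$ by an identical computation, so none lies in $S$, and $\p'\notin\L$. The other five choices of 4-stack window are handled identically.

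\textbf{Condition (II).} This is the main obstacle. Given $\p\notin\L$, I construct a legal move into $\L$. Since $S$ and legal moves commute with constant shifts $\p\mapsto\p-m\onev$, we may assume $\min(\p)=0$; rotate so $p_1=0$, and reflect if necessary (using the reflection that fixes position $1$ and swaps $p_2\leftrightarrow p_6,\ p_3\leftrightarrow p_5$, under which $S$ is closed) so that $b:=p_2 \le p_6=:f$. With $a=0$, any $\p'\in S$ takes the form $(0,B,C,D,C,B)$ with $B=C+D$, a $2$-parameter family. The case analysis reduces to three choices of window:
\begin{itemize}
\item If $b\le c+d$ and $b\le d+e$, use the move on positions $3$--$6$ with $B=b$, $D=b-C$, and $C\in[\max(0,b-d),\min(b,c,e)]$ (nonempty by assumption); $B\le f$ holds since $b\le f$.
\item Otherwise $b>c+d$ or $b>d+e$. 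If $c\le e$, then $b>c+d$ necessarily, and the move on positions $5,6,1,2$ with $B=c+d$, $C=c$, $D=d$ is legal (using $c+d<b\le f$). If $c>e$, then symmetrically $b>d+e$, and the move on positions $6,1,2,3$ with $B=d+e$, $C=e$, $D=d$ is legal.
\end{itemize}
Non-triviality $\p'\ne\p$ follows from $\p\notin S$. The obstacle is verifying that these three window choices exhaust all cases; tracking $b$ against $c+d$ and $d+e$ together with the ordering of $c$ and $e$ shows they do.
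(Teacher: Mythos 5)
Your reduction in Condition (II) to the case $\min(\p)=0$ is where the argument breaks. You assert that ``$S$ and legal moves commute with constant shifts $\p\mapsto\p-m\onev$,'' but the digital-sum condition $a\+c\+e=0$ is \emph{not} invariant under adding a constant to every stack: for example $(0,1,1,0,1,1)$ satisfies $0\+1\+1=0$ and lies in $S$, while its shift $(1,2,2,1,2,2)$ has $1\+2\+2=1\ne 0$ in every representation and lies outside $S$. (This is exactly why the paper can use the shift trick for $\c(5,2)$ and $\c(6,3)$, whose defining conditions are purely linear, but does not use it for $\c(6,4)$.) Consequently your entire case analysis for (II) only treats positions that genuinely contain an empty stack; for those positions your three windows are exhaustive and each move you write down is legal and lands in $S$, which is a nice, clean argument --- but the positions with $\min(\p)>0$ are untouched, and for them the target family is no longer the two-parameter set $(0,B,C,D,C,B)$ with $B=C+D$ but the messier family $(m,B,C,D,m\+C,\ldots)$. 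The paper handles the general case differently: it first uses Lemma~\ref{digsumzero} to lower one stack of the triple of pairwise minima so as to create the digital triangle, and then equalizes the three differences of diagonally opposite stacks, with a case split on whether the pairwise minima alternate or are consecutive. Some replacement for that two-step idea is needed; the shift cannot supply it.

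There is also a smaller gap in Condition (I). Once you pin the minimum at position $1$, the stabilizer of your chosen representation is only the reflection through positions $1$ and $4$, and the six $4$-stack windows fall into three orbits, not one: besides the window forcing ``reduce $a$ and $d$ by $t$,'' there are windows forcing ``reduce $b$ and $e$ by $t$'' and ``reduce $c$ and $f$ by $t$.'' These are not ``handled identically'': there the minimum of $\p'$ can migrate to position $2$ or $5$ (or tie with an unchanged even-position stack), so the representations you must rule out involve the XOR $(b-t)\+d\+f$ rather than $(a-t)\+a$, and excluding $(b-t)\+d\+f=0$ requires a separate argument (it does work out --- e.g.\ $b-t\le a$ together with $b\le e$ forces $d=a$ and collapses the position --- but you have not given it). The paper avoids this by only requiring the minimum to lie somewhere in the triple $(a,c,e)$, which restores full symmetry among the windows, at the cost of the case analysis via Lemma~\ref{two minima}.
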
 

\begin{figure}[htp]
\begin{center}
\epsfxsize=70.0pt  \epsffile{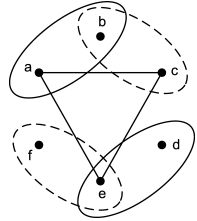} 
\caption{Losing positions for $\c(6,4)$.}\label{n6-4}
\end{center}
\end{figure}

As before, the third set of opposite pairs also has to have equal sums. In addition,  a losing position in which the minimum occurs simultaneously in each of the triples $(a, c, e)$ and $(b,d,f)$ reduces to a special case. 

\begin{lemma}\label{two minima} If the position $\p=(a,b,c,d,e,f) \in \L_{\c(6,4)}$ has its minimal value in each of the two triples $(a, c, e)$ and $(b,d,f)$, then $\p=(a,b,c,a,b,c)$.
\end{lemma}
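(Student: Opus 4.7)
The plan is to exploit the hypothesis that $a = \min(\p)$ together with the requirement that the minimum value also appears in the triple $(b, d, f)$. Since the global minimum of $\p$ equals $a$, at least one of $b$, $d$, $f$ must equal $a$. I would split into three cases accordingly, and in each case show that the sum equations $a + b = d + e$ and $b + c = e + f$ force $d = a$, $e = b$, and $f = c$.

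First, I would dispose of the case $d = a$, which is the cleanest: the equation $a + b = d + e$ immediately gives $b = e$, and then $b + c = e + f$ yields $c = f$. So $\p = (a, b, c, a, b, c)$ directly. Next, for the case $b = a$, the equation $a + b = d + e$ becomes $2a = d + e$; since $d \ge a$ and $e \ge a$ (as $a$ is the global minimum), this forces $d = e = a$, reducing back to the first case and yielding $\p = (a, a, c, a, a, c)$. Finally, for the case $f = a$, from $b + c = e + a$ I solve $e = b + c - a$, and substitution into $a + b = d + e$ gives $d = 2a - c$. The constraint $d \ge a$ forces $c \le a$, and combined with $c \ge a$ (since $a$ is the minimum) we obtain $c = a$ and $d = a$, once again reducing to the first case with $\p = (a, b, a, a, b, a)$.

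The only mild subtlety is verifying that the three cases are exhaustive (which they are by the min-occurrence hypothesis) and that the derived values are consistent with $b, c, d, e, f \ge a$ — which follows automatically from the derivations themselves. Notably, the digital sum condition $a \oplus c \oplus e = 0$ is not needed for this lemma; the two linear sum equalities together with the minimum condition are sufficient. (In fact, feeding the result $\p=(a,b,c,a,b,c)$ back into $a \oplus c \oplus e = 0$ would yield the additional consequence that in the degenerate sub-cases $b = a$ or $f = a$ the whole position collapses to the zero vector, but this is not required for the statement.) Since no case presents a genuine difficulty, I do not expect a main obstacle beyond the routine case split.
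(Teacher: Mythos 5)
Your proof is correct and takes essentially the same approach as the paper: both arguments case-split on where the minimum of $(b,d,f)$ sits relative to $a$ and then use the two sum equalities together with minimality to force $d=a$, $e=b$, $f=c$. The only cosmetic difference is that you treat $b=a$ and $f=a$ as separate explicit cases where the paper handles them as one ``adjacent'' case w.l.o.g.
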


\begin{proof} There are two cases to be considered: The minima are adjacent, or they are not adjacent. Assume that w.l.o.g. that the two adjacent minima occur at $a$ and $b$. Since $a=b$, we have $d=e=a$ (because of the minimality of $a$ and $b$), and consequently, due to the equality of the paired sums, $c=f$. For the second case assume the minima occur at $a$ and $d$. Then $e=b$ and $f=c$ because of the equality of paired sums.
\end{proof}

In addition, we make use of a well-known result about digital sums.

\begin{lemma}\label{digsumzero} For any set of positive integers $x_1$, $x_2, \ldots, x_n$ whose digital sum is not equal to zero, there exists an index $i$ and a value $x_i'$ such that $0 \le x_i' < x_i$ and
$x_1\+\cdots \+x_{i-1}\+x_i'\+x_{i+1}\+\cdots\+x_n=0.$
\end{lemma}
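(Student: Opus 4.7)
The plan is to invoke the classical Bouton ``XOR trick'' that underlies the analysis of ordinary Nim. Let $s := x_1 \+ x_2 \+ \cdots \+ x_n$; by hypothesis $s \ne 0$. Let $k$ be the position of the most significant bit of $s$, so bit $k$ of $s$ is a $1$ and every bit of $s$ above position $k$ is a $0$.

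Since $\+$ acts bit-wise and bit $k$ of $s$ equals $1$, an odd---hence positive---number of the $x_j$'s carry a $1$ in bit position $k$. I will pick any such index $i$ and define $x_i' := x_i \+ s$. With this choice, using associativity and commutativity of $\+$ together with the identity $a \+ a = 0$, the new total XOR collapses immediately:
\[
x_1 \+ \cdots \+ x_{i-1} \+ x_i' \+ x_{i+1} \+ \cdots \+ x_n \;=\; s \+ x_i \+ x_i' \;=\; s \+ x_i \+ (x_i \+ s) \;=\; 0,
\]
so the required identity is built into the very definition of $x_i'$.

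The only step that needs real verification is that this $x_i'$ satisfies $0 \le x_i' < x_i$, so that it corresponds to a legal reduction of the $i$-th value. Non-negativity is automatic, since a digital sum of non-negative integers is non-negative. For the strict inequality I will compare $x_i$ and $x_i'$ bit by bit: above position $k$, $s$ has only $0$ bits, so $x_i$ and $x_i'$ agree there; at position $k$ itself, $x_i$ has a $1$ and $s$ has a $1$, so $x_i' = x_i \+ s$ has a $0$ at position $k$. Therefore $x_i'$ first differs from $x_i$ at the high-order position $k$ by being smaller, which forces $x_i' < x_i$ regardless of the lower-order bits. I do not anticipate any real obstacle here, since both ingredients---locating the leading bit of $s$, and exploiting $a \+ a = 0$---are entirely standard, and the construction is explicitly given by $x_i' = x_i \+ s$.
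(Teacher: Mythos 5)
Your proof is correct. The paper does not actually supply a proof of this lemma---it is invoked as ``a well-known result about digital sums''---and your argument is precisely the classical Bouton construction one would expect: take $s=x_1\+\cdots\+x_n$, choose an index $i$ whose entry has a $1$ in the leading bit position of $s$, and set $x_i'=x_i\+ s$; the verification that $x_i'<x_i$ via the leading-bit comparison is complete and accurate.
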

 

We are ready to prove Theorem~\ref{L6-4}.
 
\begin{proof}  Let $S=\{(a,b,c,d,e,f)\mid a+b = d+e, b+c = e+f, a\+c\+e=0 \}$ and let $\p \in S$. Note that we have not yet indicated where the minimum occurs, but we assume that it occurs at either $a$, $c$ or $e$. If play is on one, two, or three consecutive stacks, then any move from $\p \in S$ is to $\p' \notin S$ as in the game $\c(6,3)$. Therefore, play has to occur on four consecutive stacks, w.l.o.g., on stacks $a$ through $d$. We now attempt to make a move to another position in $S$. Since stacks $e$ and $f$ do not change, we cannot have a reduction in stacks $b$ and $c$ as the sums have to remain equal. Therefore, play is only on stacks $a$ and $d$, and these two stacks have to be reduced by the same amount, that is $\p'=(a-x,b,c,d-x,e,f)$ for some $x>0$. Let us refer to a triple whose stack heights have digital sum zero as a {\em digital triangle}. Since the digital triangle of $\p$ is $(a,c,e)$ and only stack $a$ is changed, the triangle $(a-x, c, e)$ is no longer digital, so  $(b, d-x,f)$ has to be the digital triangle of $\p'$. If the minimal value in the digital triangle of $\p$ is $a$, then $a-x$ is the only minimum in $\p'$ and it is not part of the digital triangle, so $\p'\notin S$. On the other hand, if the minimum of $\p$ occurs at either $c$ or $e$, then for $\p'$ to be in $S$, the minimum for $\p'$ has to occur in the digital triangle $(b, d-x, f)$. Since only the value of $d$ has changed in those stacks, then $d-x$ has to be the  minimal value of $\p'$. We need to consider two subcases:  the minimum occurs in both triangles of $\p'$, or the minimum of $\p'$ is unique. In the first subcase, Lemma~\ref{two minima} tells us that $\p'$ is of the form $(a,b,c,a,b,c)$; therefore, $\p'=(a-x,b,c,a-x,b,c)\notin S$ as $\p'$ does not have a digital triangle.  In the second case, we may assume w.l.o.g. that $\min(\p)=e$, and therefore,  $e < b$. Since the minimum of $\p'$ is unique, $d-x<a-x$, which implies that $d < a$. Combining the inequalities leads to $d+e<a+b$, so $\p' \notin S$. As there is no legal move from a position in $S$ to another position in $S$,  condition (I) is satisfied. 

Now we turn to the harder part, namely showing that from any position  $\p \notin S$, we can make a legal move to a position $\p' \in S$. Note that the condition to have equal sums for diagonally opposite pairs of stacks is equivalent to the condition $a-d=e-b=c-f$, that is, the differences between diagonally opposite stacks is the same for all such pairs.  To create a position $\p' \in S$ from a position $\p \notin S$ we proceed in two steps - first we create the digital triangle, and then we adjust the pairwise  differences of diagonally opposite pairs. To better visualize the relative sizes of stacks, we will label pairwise diagonally opposite values with the same letter, using lowercase for the smaller of the two and uppercase for the larger one. There are two different cases:
\begin{enumerate}
\item[1.] the pairwise minima are alternating with pairwise maxima (and thus form a triangle); or
\item[2.] the pairwise minima are all consecutive.
\end{enumerate}
To show that there is no third case, consider what happens when two of the pairwise minima are next to each other. The values of the third pair are adjacent to the two pairwise minima, and one of the two values has to be the pairwise minimum, making all the pairwise minima adjacent to each other. 

We now look at the two cases separately. Even though they have much in common, to combine them would create cumbersome notation). \\
Case 1: Let $\p=(A,b,C,a,B,c)$. By Lemma~\ref{digsumzero}, we can adjust one of the three pairwise minima to create a digital sum of zero (if not already digital). Assume that the value adjusted is $a$, and the new value is $\tilde{a} \le a \le A$. Compute the minimal pair difference $m =\min(A-\tilde{a}, B-b,C-c)$. In order to make all the pairwise differences equal to $m$, we need only adjust two of the pairwise maxima. If $m=A-\tilde{a}$, we adjust $B$ and $C$, which are adjacent to $\tilde{a}$, and $\p \ra (A,b,c+m,\tilde{a},b+m,c)$. If  $m=B-b$, then we need to adjust $A$ and $C$, and the two consecutive stacks $B$ and $c$ are not changed; in this case  $\p \ra (\tilde{a}+m,b, c+m,\tilde{a}, B, c)$. The case $m=C-c$ follows by symmetry.\\
Case 2:  Let $\p=(A,B,C,a,b,c)$. Again using Lemma~\ref{digsumzero}, we identify the value that needs to be adjusted to create a  digital triangle. If $a$ is the value to be reduced, then we reduce both $A$ and $a$ to $\tilde{a}$, and reduce the other two pairwise maxima to their respective minima, that is $\p \ra (\tilde{a},b,c,\tilde{a},b,c)$. (The case where $c$ needs to be reduced follows by symmetry.) If  $b$ is the value that needs to be reduced to create a zero digital sum, then we reduce $B$ to $\tilde{b} \le b \le B$, and are basically in the situation of Case 1. The two possible legal moves are $\p \ra (a+m, \tilde{b}, c+m, a,b,c)$ with $m=b-\tilde{b}$ or  $\p \ra (a+m, \tilde{b}, C, a,\tilde{b}+m,c)$ with $m=C-c$. 
\end{proof}

The last case for $n=6$ is $\c(6,2)$, which remains an open question. So far, we have not been able to find a conjectured structure for $\L_{\c(6,2)}$ that has not been undone by a counter example. However, we know that the set of losing positions cannot be closed under addition, as $\c(6,2)$ reduces to Nim on three stacks when every other stack has been reduced to zero tokens. In fact, this is the case for all games $\c(n,2)$ for $n \ge 6$.

\section{Larger values of $n$}

As $n$ gets larger, the structure of the losing set becomes more complicated. We show one example for $n=8$, where we have a new feature, namely a minimum involving the sum of stack heights, in the structure of the losing set.

\begin{theorem} \label{8-6} The set of losing positions for the game $\c(8,6)$ is given by 
$$\L=\{ (0,x,a_1,b_1,e,b_2,a_2,x) \mid a_1+b_1=a_2+b_2=x \text{ and } e=\min(x,a_1+a_2)\}.$$
\end{theorem}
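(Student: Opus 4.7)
The plan is to verify conditions (I) and (II) of Theorem~\ref{lose} for the claimed set $S$, which I find cleaner to rewrite as: $p_1=0$, $p_2=p_8=p_3+p_4=p_6+p_7=x$, and $p_5=\min(p_2,p_3+p_7)$. This form is invariant under the reflection that swaps $2\leftrightarrow 8$, $3\leftrightarrow 7$, $4\leftrightarrow 6$ and fixes $p_1$ and $p_5$, which will roughly halve each case analysis.

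For condition (I), I will fix $\p\in S$ and consider any legal move, which leaves fixed some pair of consecutive stacks (the \emph{anchor}). Using the reflection symmetry, the eight possible anchors collapse to four cases: $(p_1,p_2)$, $(p_2,p_3)$, $(p_3,p_4)$, and $(p_4,p_5)$. In each case the argument will exploit the saturation of the pair-sum equalities $p_3+p_4=p_6+p_7=p_2=x$: if a summand is reduced while the sum is preserved, the partner must change too, and if the sum drops then the corresponding ``$x$'' must drop. For example, with anchor $(p_1,p_2)$ the value $x$ is frozen; then $p_3'+p_4'=x$ together with $p_3'\le p_3$, $p_4'\le p_4$ and $p_3+p_4=x$ pins $p_3,p_4$, and symmetrically $p_6,p_7,p_8$; finally $p_5'=\min(x,p_3+p_7)=e$, so the move is trivial. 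A secondary subtlety is that $\p'$ may fit the $S$-form under a different labeling if it has multiple zeros; the degenerate subcases ($a_1=0$, $a_2=0$, or the alternating $(0,x,0,x,0,x,0,x)$) will be handled separately by direct inspection.

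For condition (II), given $\p\notin S$ I will construct a legal move to some $\p'\in S$. By rotation I may assume $p_1=\min(\p)$ and aim for $p_1'=0$; then the anchor avoids $p_1$, leaving six choices which the target's reflection symmetry reduces to three essentially distinct ones, say $(p_4,p_5)$, $(p_3,p_4)$, and $(p_2,p_3)$. The anchor fixes some of the target parameters (e.g., anchor $(p_4,p_5)$ fixes $e'=p_5$ and forces $a_1'=x'-p_4$), and I will verify that the remaining freedom suffices to satisfy the legality inequalities $x'\le\min(p_2,p_8)$, $a_1'\le p_3$, $a_2'\le p_7$, $x'-a_1'\le p_4$, $x'-a_2'\le p_6$, together with $e'=\min(x',a_1'+a_2')$. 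The piecewise definition of $e'$ forces a secondary split ($e'=x'$ versus $e'=a_1'+a_2'$), and the main work will be showing that every relative ordering of $p_1,\ldots,p_8$ is covered by some explicit choice of anchor and subcase. I expect this to be the main obstacle: unlike in Theorem~\ref{L6-4}, the minimum condition interacts nontrivially with the axial pair-sum structure, and the finite but delicate case analysis will require careful bookkeeping to rule out configurations where every candidate anchor apparently fails.
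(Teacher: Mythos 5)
Your condition (I) outline is essentially the paper's argument (organized by which consecutive pair of stacks is left fixed rather than by whether the $x$ stacks move), and it should go through. The gap is in condition (II), and it is twofold. First, your normalization ``assume $p_1=\min(\p)$ and aim for $p_1'=0$'' is not sound: there are positions $\p\notin S$ from which \emph{no} legal move reaches an $S$-position whose zero sits at the location of $\min(\p)$. Take $\p=(1,2,3,3,10,3,3,2)$. Any $\p'\in S$ with its zero at stack $1$ would need $x'\le\min(p_2,p_8)=2$, hence $p_3'\le x'<3$, $p_7'\le x'<3$, and $p_5'\le x'<10$, so stacks $1,3,5,7$ would all have to strictly decrease; but no six consecutive stacks contain all four of them, so no such move is legal. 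Nevertheless $\p$ is solved: the move fixing stacks $8$ and $1$ to $\p'=(1,1,2,0,2,2,0,2)$ lands in $S$ (read from stack $4$: $(0,2,2,0,2,1,1,2)$, with $x=2$ and $e=\min(2,1+2)=2$), with the new zero nowhere near the old minimum. So the very first reduction of your plan for (II) already fails, and the case analysis cannot be organized by ``zero at the minimum.''

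Second, even setting that aside, your proposal for (II) is a plan rather than a proof: the sentence ``the main work will be showing that every relative ordering of $p_1,\ldots,p_8$ is covered'' is precisely the entire difficulty of the theorem, and you have not carried it out. For comparison, the paper does not prescribe where the zero goes; it proves a battery of sufficient-condition lemmas (Valley, Trapezoid, two Double-min, MaxMin, and a Clean-up lemma), each of which places the zero at a location dictated by the local shape of $\p$, and then verifies by an exhaustive computer check that every one of the $7!/2=2520$ relative orderings of the stacks satisfies the hypotheses of at least one lemma. Any correct completion of your approach would have to replace the minimum-based normalization with something of comparable scope, and would still face the same exhaustiveness obligation that the paper discharges by machine.
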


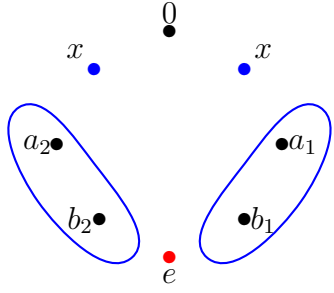
\begin{figure}[htp]
 \begin{center}
  { \psset{xunit=0.5,yunit=0.5}
 \begin{pspicture}(7,7)
 \rput(1,1){{ $\bullet$}} \rput(0.63,1){$b_2$}
 \rput(3,0){$\myred\bullet$}\rput(3,-0.5){$e$}
  \rput(5,5){{\cb $\bullet$}}\rput(5.5,5.5){$x$}
 \rput(6,3){$\bullet$}\rput(6.55,3){$a_1$}
  \rput(5,1){$\bullet$}\rput(5.5,1){$b_1$}
 \rput(3,6){$\bullet$}\rput(3.,6.5){$0$}
  \rput(1,5){{\cb $\bullet$}}\rput(0.5,5.5){$x$}
 \rput(0,3){$\bullet$}\rput(-0.5,3){$a_2$}
 \psccurve[linecolor=blue](4,0)(6.5,1.5)(7,4)(5,2.5)
  \psccurve[linecolor=blue](2,0)(-0.5,1.5)(-1,4)(1,2.5)
 \end{pspicture}}
\caption{Losing positions for $\c(8,6)$.}\label{n8,6}
\end{center}
\end{figure}

\begin{remark} \label{zeros} Before proving Theorem~\ref{8-6} we will discuss the role of the zeros in a losing position. Specifically, we will see that if a losing position has more than one zero then the position will have a reflection symmetry (dotted lines in Figure~\ref{sym}) that clearly shows that any of the zeros can be deemed the $``0"$ of the typical losing position  $\p=(0,x,a_1,b_1,e,b_2,a_2,x) \in \L$.   Note that a zero stack is always between two maximal stacks $x$.
\begin{enumerate} 
\item If $x=0$, then $\p=(0,0,0,0,0,0,0,0)$.
\item If  $a_1=0$, then  $b_1=x$ and $e=\min(x,a_1+a_2)=\min(x,a_2)=a_2$, and therefore, $\p=(0,x,0,x,a_2,b_2,a_2,x)$ and the conditions of $\L$ hold for either of the two zeros as only the sum of $a_2$ and $b_2$ matters, and their positions can be interchanged due to rotational symmetry (see Figure~\ref{sym(2)}).
\item If $ b_1=0$, then $a_1=x$ and $ e=\min(a_1+a_2,x)=\min(x+a_2,x)=x$, and therefore $\p= (0,x,x,0,x,b_2,a_2,x)$, and once more, the conditions can be verified for the second zero as well (see Figure~\ref{sym(3)}).
\item If $e=0$, then either $x=0$, a case considered before, or both $a_1$ and $a_2$ are zero, which results in $\p=(0,x,0,x,0,x,0,x)$, a special case of (2) above.
\end{enumerate}
\end{remark}

\begin{figure}[htp]
  \begin{center}
   \subfigure[Case $(2)$]{\label{sym(2)}
      { \psset{xunit=0.6,yunit=0.6}
 \begin{pspicture}(0,0)(6,6)
 \rput(5,3){{ $\bullet$}} \rput(5.8,3){$0$}
 \rput(4.3,4.3){$\bullet$}\rput(4.9,4.5){$x$}
  \rput(3,5){{$\bullet$}}\rput(3,5.8){$0$}
 \rput(1.7,4.3){$\bullet$}\rput(1.2,4.8){$x$}
  \rput(1,3){$\bullet$}\rput(0.2,3){$a_2$}
 \rput(1.7,1.7){$\bullet$}\rput(1.0,1.7){$b_2$}
\rput(3,1){{ $\bullet$}}\rput(3,0.5){$a_2$}
 \rput(4.3,1.7){$\bullet$}\rput(4.7,1.3){$x$}
 \psline[linestyle=dotted,linewidth=1.5pt,dotsep=2pt](0.5,0.5)(5.5,5.5)
 \end{pspicture}}}  \hspace{2cm}
    \subfigure[Case $(3)$]{\label{sym(3)}
    { \psset{xunit=0.6,yunit=0.6}
 \begin{pspicture}(0,0)(6,6)
 \rput(5,3){{ $\bullet$}} \rput(5.8,3){$x$}
 \rput(4.3,4.3){$\bullet$}\rput(4.8,4.8){$x$}
  \rput(3,5){{$\bullet$}}\rput(3,5.8){$0$}
 \rput(1.7,4.3){$\bullet$}\rput(1.2,4.8){$x$}
  \rput(1,3){$\bullet$}\rput(0.2,3){$a_2$}
 \rput(1.7,1.7){$\bullet$}\rput(1.4,1.4){$b_2$}
\rput(3,1){{ $\bullet$}}\rput(3,0.5){$x$}
 \rput(4.3,1.7){$\bullet$}\rput(4.7,1.3){$0$}
 \psline[linestyle=dotted,linewidth=1.5pt,dotsep=2pt](0,1.8)(6,4.2)
 \end{pspicture}}} 
      \end{center}
  \caption{Symmetric positions.}
  \label{sym}
\end{figure}
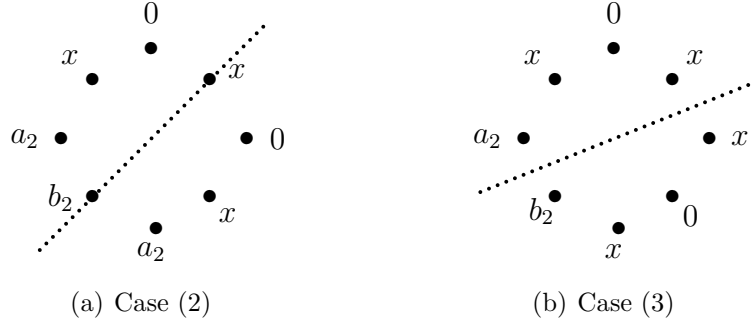

\begin{proof}
Let $S=\{ (0,x,a_1,b_1,e,b_2,a_2,x) \mid a_1+b_1=a_2+b_2=x \text{ and } e=\min(x,a_1+a_2)\}$.  As before, we will show that $S$ is the set of losing positions by showing that $S$ satisfies conditions (I) and (II) of Theorem~\ref{lose}. We start by proving condition (I).

Suppose $\p=(0,x,a_1,b_1,e,b_2,a_2,x)\in S$.  Consider first the case where one of the two $x$ stacks is not reduced by the move. As it is beside a $0$ and remains the maximal element,  neither the other $x$ nor any of the $a_i$ and $ b_i$ with $i=1,2$ can be reduced (otherwise $ a_i+b_i < x$) if the resulting position is to be in $S$. Furthermore, since $e=\min(x,a_1+a_2)$, it too must remain fixed, which implies that there is no legal move in this case. Now assume both $x$ stacks are reduced to $x'<x$. To create a position in $S$,  one must play on at least one of $a_1$ and $b_1$ to make $a_1'+b_1'=x'$ and on at least one of $b_2$ and $a_2$ to make $a_2'+b_2'=x'$. Since it is not possible to play on both $b_1$ and $b_2$  (as no set of six consecutive  stacks contains both of them and the two $x$ stacks), 
at least one of the $a_1$ and $a_2$ stacks  must be reduced. W.l.o.g, assume play is on $a_1$, and thus, since $a_1'+a_2<a_1+a_2=x$ and $x'<x$, we have that  $e' < e$, but   no set of six consecutive  stacks contains both of the $x$ stacks, $a_1$, $e$, and at least one of $a_2$ and $b_2$, and thus there is no legal move. 
This completes the proof for condition (I).

To show condition (II), we will prove that  for different classes of positions $\p$ there is an option $\p'$ of $\p$ that belongs to $S$, and then show that every possible position $\p \notin S$ belongs to at least one of these classes. We will say that $\p$ {\em is solved} if there is a legal move from $\p$ to $\p' \in S$. Furthermore,  in $\c(8,6)$ we have at least two stacks whose height remains the same in a legal move, and we will refer to those stacks as {\em fixed}.

We now prove a sequence of lemmas, each showing that a different class of positions is solved. Lemmas~\ref{valley} and~\ref{max-min} will also be used in  the proofs of the subsequent lemmas.

\begin{definition} If a position $\p$ contains four consecutive stacks $a,b,c$, and $d$ such that $b+c\le \min(a,d)$, then these four stacks are called a {\em valley} of the position, and we will refer to the four stacks satisfying this condition as $\val{a}{d}$.  The {\em size of the valley} is defined as $|\val{a}{d}|=b+c$.
\end{definition}

\begin{lemma} \label{valley} (Valley lemma) A position $\p$ that contains a valley is solved.
\end{lemma}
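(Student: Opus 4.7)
I plan to exhibit, for any position $\p$ containing a valley $(a,b,c,d)$, an explicit legal move whose result lies in $S$. By the circular symmetry of the game, I shift so that the valley occupies positions $1,2,3,4$; by the reflection that interchanges $b$ and $c$ (and reverses the four stacks outside the valley), I may further assume $b\le c$. The valley inequality $b+c\le \min(a,d)$ then provides slack in the outer valley stacks $a$ and $d$ for carving out the structure demanded by $S$.

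The canonical move fixes the inner valley stacks $p_2=b$ and $p_3=c$ and plays on the six consecutive stacks $p_4,p_5,p_6,p_7,p_8,p_1$. Its target places the unique zero of the losing pattern at $p_4$ and takes the common value $x$ of the two $x$-stacks equal to $c$. Aligning the losing pattern $(0,x,a_1,b_1,e,b_2,a_2,x)$ with the stacks $(p_4,p_5,p_6,p_7,p_8,p_1,p_2,p_3)$ forces $a_2=b$, $b_2=c-b$, $a_1+b_1=c$, and $e=\min(c,a_1+b)$, producing the target $\p'=(c-b,b,c,0,c,a_1,c-a_1,e)$. Membership $\p'\in S$ is built into this construction, so only the componentwise inequality $\p'\le \p$ must be verified. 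The entries $p_1'=c-b$ and $p_4'=0$ are automatically within bounds since $c-b\le c\le b+c\le a$ and $0\le d$; the remaining conditions reduce to $p_5\ge c$, the interval $\max(0,c-p_7)\le a_1\le \min(c,p_6)$ being nonempty (equivalently $p_6+p_7\ge c$), and $p_8\ge e$ for a small enough choice of $a_1$. When all of these hold, the canonical move is legal and the proof is complete in this case.

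The remainder of the proof deals with configurations where one or more of the above inequalities fails. There I resort to backup moves of two kinds: first, moves that keep $(p_2,p_3)$ fixed but relocate the zero of the losing pattern to $p_5$ or $p_8$ (with $x=b+c$ in that case), trading the ``$\ge c$'' constraints on $p_5,p_6,p_7$ for symmetric ``$\ge b+c$'' constraints on a different pair of outside stacks; second, moves that fix an outside pair such as $(p_5,p_6)$ or $(p_6,p_7)$ and play through the entire valley, building the losing pattern around these fixed values while the large stacks $a$ and $d$ absorb whatever reductions are needed. The principal obstacle will be the combinatorial case analysis showing that the failure of the canonical move always triggers at least one successful backup; the slack $b+c\le \min(a,d)$ is what keeps all moves from failing together, and a finite case split on which inequalities $p_j<c$ or $p_j<b+c$ hold completes the argument.
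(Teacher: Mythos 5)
Your canonical move is sound as far as it goes: fixing the inner valley stacks $b\le c$, zeroing the far outer stack of the valley, and taking $x=c$ does produce a position in $S$ whenever $p_5\ge c$, $p_6+p_7\ge c$, and $p_8$ is large enough, and the valley inequality does guarantee legality at $p_1$. But the proof stops exactly where the real work begins. The entire difficulty of this lemma is the exhaustive verification that when the canonical move fails, some backup succeeds, and you leave that as ``a finite case split \ldots completes the argument.'' That is not a proof; the failure configurations are not enumerated, the backup targets are not written down explicitly, and their legality is not checked. As it stands the argument establishes only a sufficient condition for being solved, not the lemma.

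There is also a substantive idea missing, not just unexecuted bookkeeping. You assert that ``the slack $b+c\le\min(a,d)$ is what keeps all moves from failing together,'' but a single valley inequality is not enough to close the case analysis: a position can contain several valleys, and the move built from one of them can be blocked precisely because a smaller valley sits elsewhere on the circle. The paper's proof handles this by choosing a valley of \emph{minimal size} and using that minimality twice --- once to rule out $e+f<b+c$ in its first case (otherwise $\val{d}{g}$ would be a strictly smaller valley) and once to justify the choice $e'=g-f$ in its second case. Without an analogous device (minimality, or an induction on valley size), your claim that some backup always works has no mechanism behind it. The paper's proof is also organized differently in a way worth noting: it places the zero at the stack $h$ adjacent to the valley's first outer stack, splits on whether $\max(f,g)\ge b+c$, and in the small case fixes the \emph{outside} pair $f,g$ rather than the inner pair $b,c$; this two-case split, powered by valley minimality, is what lets the argument terminate in two clean cases rather than an open-ended cascade of backups.
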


\begin{proof} 
Consider the position $\p=(a,b,c,d,e,f,g,h)$, and suppose it contains  $\val{a}{ d}$. If there is more than one valley, suppose without loss of generality that $\val{a}{ d}$ has minimal size. We make the $h$ stack  the zero of $\p'$. When reducing the $a$ and $g$ stacks to equal heights we need to consider two cases, namely $\max(f,g) \ge b+c$ and $\max(f,g) < b+c$.

Let $\max(f,g)\ge b+c$. W.l.o.g., $g\ge f$, that is, $g\ge b+c$ (otherwise make $e$ stack the zero of $\p'$) . We fix stacks  $b$ and $c$. Let $a'=b+c$ (possible because $\val{a}{d}$), $h'=0$, and $g'=b+c$. Now, if $e+f\ge b+c$, let $e'$ and $f'$ be such that $e'+f'=b+c$, and $d'=\min(b+c,b+f')$ (possible because $\val{a}{d}$ ensures $d \ge b+c$). The resulting position $\p'=(b+c,b,c,\min(b+c,b+f'),e',f',b+c,0)$ with $e'+f'=b+c$ is in $S$. Note that we always have  $e+f\ge b+c$, as otherwise $ e+f<b+c<d$ and $e+f<b+c\le g$, that is we would have  $\val{d}{g}$ with a smaller size than $\val{a}{d}$, a contradiction to the minimality of $\val{a}{d}$.

Now consider the second case,  $\max(f,g)<b+c$. Then $f<b+c$, $g<b+c$ and w.l.o.g, $f\le g$. We fix $f$ and $g$  and let $h'=0$, $a'=g$ (possible because  $a\ge b+c>g$), $e'=g-f$ (possible because otherwise  $|\val{d}{g}|<|\val{a}{d}|$, which contradicts the minimality of $\val{a}{d}$). Also, $b$ and $c$ are reduced so that $b'+c'=g$, (possible because $b+c>g$), and finally $d'=\min(g,b'+f)$ (possible because  $d\ge a+b>g$).
The resulting position $\p'=(g, b', c', \min(g,b'+f),g-f, f, g, 0)$ with $b'+c'=g$ is in $S$, that is, $\p$ is solved.\end{proof}

\begin{lemma} (Trapezoid lemma) \label{trapezoid}
If the position $\p =(a,b,c,d,e,f,g,h)$  satisfies that $\max(a,h) \le \min(f,c)$,  then $\p$ is solved.
\end{lemma}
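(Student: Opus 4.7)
My plan is to construct an explicit option $\p' \in S$ of $\p$ by rotating the losing-set template $(0, x, a_1, b_1, e_{LS}, b_2, a_2, x)$ so that its zero lands at a well-chosen position of $\p$, with the two equal maximal $x$-stacks at the neighbors of that position. The trapezoid hypothesis $\max(a, h) \le \min(c, f)$ suggests two natural orientations: placing the zero opposite the small pair at the $e$-stack (so the $x$-stacks fall at $d$ and $f$), or placing the zero at one of the small stacks $a$ or $h$ (so the $x$-stacks fall at $b,h$ or $a,g$ respectively). The large stacks $c$ and $f$, made available by the hypothesis, will supply the slack necessary to distribute the remaining losing-set values.

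Concretely, in the ``opposite'' orientation the rotated target becomes
$$\p' = (a', b', c', x, 0, x, g', h'),$$
with the defining relations $g' + h' = b' + c' = x$ and $a' = \min(x, g' + c')$; the move is legal iff the entrywise bounds $a' \le a$, $b' \le b$, $c' \le c$, $x \le d$, $x \le f$, $g' \le g$, $h' \le h$ hold with at least two of them being equalities at adjacent positions (so the play is on six consecutive stacks). Setting $x := \min(d, f)$ secures $x \le d$ and $x \le f$ with at least one of $d, f$ unchanged, giving one of the two required consecutive fixed stacks. The trapezoid bounds $h \le f$ and $a \le c$ then provide the slack in the adjacent pairs $(g, h)$ and $(b, c)$ needed to split $x$ into $g', h', b', c'$ so that a second stack adjacent to the fixed element of $\{d, f\}$ is also unchanged, while $a' \le a$ is automatic if $x \le a$ and is arranged by imposing $g' + c' \le a$ (equivalently $h' + b' \ge 2x - a$) otherwise.

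The main obstacle will be the case analysis needed to check that in every configuration a feasible split and a second adjacent fixed stack can be chosen simultaneously. Subcases that escape the ``opposite'' orientation (for instance, when $x$ is so large that both $2x - a > h + b$ and no natural fixed neighbor presents itself) are handled by switching to the ``small-stack'' orientation with $x = \min(h, b)$ or $x = \min(a, g)$, where the trapezoid bounds $h \le c, f$ and $a \le c, f$ again ensure the entrywise bounds. Any residual subcase reduces to the Valley Lemma applied to the four-stack window $(c, b, a, h)$ or $(f, g, h, a)$, whose valley condition becomes forced precisely when the direct constructions above fail. The trapezoid hypothesis is essential throughout because it is exactly what guarantees the slack in the ``large'' stacks $c$ and $f$ that makes at least one of these constructions succeed in each subcase.
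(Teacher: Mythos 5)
Your overall strategy --- rotate the $\c(8,6)$ losing template onto $\p$, fix two adjacent stacks, check entrywise bounds, and fall back on the Valley Lemma when the direct construction fails --- has the same general shape as the paper's proof, but the specific orientations you chose do not cover all cases, and your claim that the residual cases are exactly valley cases is false. Concretely, take $\p=(a,b,c,d,e,f,g,h)=(10,20,100,60,60,100,20,10)$, which satisfies $\max(a,h)=10\le 100=\min(c,f)$ and lies outside the losing set. Your ``opposite'' orientation with $x=\min(d,f)=60$ is infeasible outright: $b'+c'=60$ with $b'\le b=20$ forces $c'\ge 40$, hence $a'=\min(60,c'+g')\ge 40>10=a$. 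Your ``small-stack'' orientations with $x=\min(h,b)=10$ (zero at $a$) or $x=\min(a,g)=10$ (zero at $h$) each leave only one stack unchanged ($h$, respectively $a$), while both of its neighbours must change, so the changed stacks cannot be confined to six consecutive positions and no legal move results. Finally, neither of your fallback windows is a valley here: $g+h=30>\min(f,a)=10$ and $a+b=30>\min(h,c)=10$. Every branch of your plan fails on this position, so the gap is not merely the deferred case analysis --- the claimed dichotomy does not hold.

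The orientation that works, and the one the paper uses, is the one you left out: after normalizing w.l.o.g.\ to $a\ge h$, put the zero at $b$, with the two maximal template stacks at $a$ and $c$ and $x=a$, fixing the adjacent pair $a,h$. The trapezoid hypothesis enters exactly where it is needed: $c'=a\le c$ and $f'=\min(a,h+d')\le a\le f$ are automatically legal, and the only remaining obstructions --- not enough room to achieve $d'+e'=a$ or $g'=a-h$, i.e.\ $d+e<a$ or $g+h<a$ --- are covered by the valley conditions on the windows $(c,d,e,f)$ and $(f,g,h,a)$; note that the first of these is also not one of the windows you named. With this orientation the move is $\p'=(a,0,a,d',e',\min(a,h+d'),a-h,h)$ with $d'+e'=a$, and the whole proof takes a few lines. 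I would encourage you to redo the analysis with the zero placed adjacent to the small pair rather than opposite to it or on it.
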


\begin{proof} W.l.o.g. assume that $ a\ge h$.  Now if $d+e\le \min(f,c)$, then $\val{c}{f}$ and $\p$ is solved, so we can assume that $d+e>\min(f,c)$. Similarly, if $g+h \le \min(a,f)=a$, then $\val{f}{a}$, so we can assume  that $g+h>a$. With these two inequalities in hand we can proceed: fix $a$ and $h$ and let $b'=0$, $c'=a$, $d'+e'=a$ (possible because $d+e>c\ge a$), $g'=a-h\ge 0$ (possible because $h\le a$ and $g+h>a$), and $f'=\min(a,h+d')$ (possible because $f\ge a$). We then get $\p'=(a,0,a,d',e',\min(a,h+d'),a-h,h) \in S$.
\end{proof}

The next two lemmas consider cases in which adjacent stacks $a$ and $b$ are each smaller than the minimum of a specified pair of stacks.

\begin{lemma} (First double min lemma) \label{dmin1}
A position $\p=(a,b,c,d,e,f,g,h)$ for which $a\le \min(d,f)$ and  $b\le \min(a,g)$ is solved.
\end{lemma}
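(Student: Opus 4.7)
The plan is to reduce to the no-valley case and then exhibit an option $\p'\in S$ by hand. If $b+c \le a$, then since $a\le d$ the four consecutive stacks $a,b,c,d$ form a valley $\val{a}{d}$ and Lemma~\ref{valley} already solves $\p$. Likewise, if $g+h \le a$, then $a\le f$ gives a valley $\val{f}{a}$. So I may assume $b+c>a$ and $g+h>a$.

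Under these two strict inequalities I will produce $\p'$ explicitly as
\[
\p' = (a,\; b,\; a-b,\; a,\; 0,\; a,\; a_1,\; a-a_1),
\]
where $a_1$ is any integer chosen in the interval $[\max(b,\,a-h),\;\min(a,\,g)]$. The lemma's hypotheses $b\le a$ and $b\le g$, together with the trivial $a-h\le a$ and the no-valley reduction $g+h\ge a$ (equivalent to $a-h\le g$), force this interval to be nonempty. The first two entries of $\p'$ equal those of $\p$, so all changes lie within positions $3$ through $8$, i.e., within six consecutive stacks; hence the move is legal in $\c(8,6)$. The componentwise constraints $\p'\le\p$ reduce exactly to $a-b\le c$, $a\le d$, $a\le f$, $a_1\le g$, and $a-a_1\le h$, each of which holds by the hypotheses or by the two no-valley reductions.

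To see that $\p'\in S$, rotate so that its zero sits at position $1$; this produces
\[
(0,\; a,\; a_1,\; a-a_1,\; a,\; b,\; a-b,\; a),
\]
which matches the canonical form of $S$ with $x=a$: the two paired sums $a_1+(a-a_1)$ and $b+(a-b)$ each equal $x$, and the middle entry is $\min(a,\, a_1+(a-b)) = a$ precisely because the lower bound $a_1\ge b$ was built into the choice of interval.

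The step I expect to be most delicate is locating the correct placement of the zero and of the two copies of $x$. The hypotheses $a\le d$ and $a\le f$ single out $d$ and $f$ as the natural candidates for the two $x=a$ stacks, which forces the zero to sit at the opposite stack $e$ and the two unchanged stacks to be the small pair $a,b$. Once this placement is guessed, the remaining work is a routine calculation, and it is satisfying that the two valley-exclusions correspond exactly to the feasibility of the constraint $a-b\le c$ at position $3$ and of the interval bound $a_1\ge a-h$ at position $8$.
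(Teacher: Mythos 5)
Your proof is correct and takes essentially the same route as the paper's: both reduce to the no-valley case via Lemma~\ref{valley} (through exactly the exclusions $b+c\le a$ and $g+h\le a$), keep stacks $a$ and $b$ fixed, set $e'=0$, $d'=f'=a$, $c'=a-b$, and split the remaining $x=a$ between the last two stacks subject to $g'\ge b$. The only cosmetic difference is that you describe the whole interval of admissible choices for $g'$, whereas the paper simply takes the endpoint $g'=\min(a,g)$.
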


\begin{proof} By Lemma~\ref{valley}, we only need consider positions $\p$ that do not contain a valley.  We fix 
 $a$ and $b$ and let $e'=0$, $d'=f'=a$ (possible because $a\le \min(d,f)$), $c'=a-b$ (possible because $b\le a$  and $c< a-b$ would imply  $\val{a}{d}$).
For the resulting position to be in $S$, one must  have $g'+h'=a$ and  $a'=a=\min(d',c'+g')=\min(a,a-b+g')$. We have that $g+h\ge a$, otherwise $\val{f}{a}$,  and therefore it is possible  to obtain $g'+h'=a$. Likewise, since $g \ge b$, we can achieve $g' \ge b$. Moreover, both conditions can be satisfied at the same time as follows: if  $g\ge a$, let $g'=a$ and $hÕ=0$, so  $\p'=(a,b, a-b,a,0,a,a,0)$; 
if $g<a$, let $g'=g$, $h'=a-g$ to yield $\p'=(a,b, a-b,a,0,a,g, a-g)$.
\end{proof}

\begin{lemma} (Second double min lemma) \label{dmin2} 
A position $\p=(a,b,c,d,e,f,g,h)$ for which $a\le \min(e,g)$ and $b\le \min(a,d)$ is solved.
\end{lemma}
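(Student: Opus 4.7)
My plan is to reduce to the case without valleys via Lemma~\ref{valley}, and then split into two sub-cases depending on the relation between $c$ and $a-b$, each handled by fixing a different adjacent pair of stacks.

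Suppose first that $c \ge a - b$. I will fix $a$ and $b$ and play on the consecutive block $c, d, e, f, g, h$, with target $\p' = (a, b, a-b, d', e', f', a, 0)$, where $f' = \min(f, \min(a,d) - b)$, $e' = a - f'$, and $d' = \min(a, b + f')$. Rotating so that the last entry becomes the first matches the $S$-template $(0, x, a_1, b_1, e, b_2, a_2, x)$ with $x = a$, $a_1 = b$, $b_1 = a-b$, $a_2 = f'$, $b_2 = e'$, and $e$-entry $d'$; the sum and minimum conditions of $S$ are then immediate. The required reductions $c' = a-b \le c$, $g' = a \le g$, $h' = 0 \le h$, and $e' \le a \le e$ follow from the hypotheses and the case assumption, while the choice of $f'$ --- nonnegative since $b \le \min(a,d)$ --- simultaneously ensures $f' \le f$ and $d' \le d$.

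When instead $c < a - b$, the absence of the valley $\val{a}{d}$ gives $b + c > \min(a, d)$; together with $b + c < a$, this forces $\min(a, d) = d$, so that $d < a$ and $b + c > d$. Now I will fix $b$ and $c$ and play on the block $d, e, f, g, h, a$, with target $\p' = (b+c, b, c, b, b+c, 0, b+c, 0)$. Rotating the last zero into position $1$, this matches the $S$-template with $x = b+c$, $a_1 = b$, $b_1 = c$, $e$-entry $b$, $b_2 = b+c$, and $a_2 = 0$, so the three defining equalities $a_1 + b_1 = x$, $a_2 + b_2 = x$, and $\min(x, a_1 + a_2) = \min(b+c, b) = b$ hold trivially. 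The six reductions are feasible because $b + c < a \le e, g$ and $b \le d$, and the move is nontrivial since $a' = b + c$ is strictly less than $a$.

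The main obstacle I expect is recognizing why the first construction fails as soon as $c < a - b$ --- the forced target $c' = a - b$ would exceed $c$ --- and pivoting to fix $(b, c)$ instead of $(a, b)$. The structural fact that makes the second construction succeed is that the no-valley hypothesis combined with $c < a - b$ forces $b \le d < b + c$; this double inequality is precisely what permits $d' = b$ and $e$-entry $\min(b+c, b) = b$ to realize the $S$-template with the smaller maximum $x = b + c$ rather than $x = a$.
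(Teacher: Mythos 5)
Your proof is correct and follows essentially the same route as the paper's: the identical case split ($b+c\ge a$ versus $b+c<a$, phrased as $c\ge a-b$ versus $c<a-b$), fixing $(a,b)$ in the first case and $(b,c)$ in the second, with your second target position literally the paper's $(b+c,b,c,b,b+c,0,b+c,0)$ and your first target a one-parameter family containing the paper's choice as the special case $f'=0$. The only divergence is your appeal to Lemma~\ref{valley} in the second case, which is harmless but not actually needed: $d'=b\le d$ already follows from the hypothesis $b\le\min(a,d)$, and the upper bound $d<b+c$ you derive is never used in verifying membership in $S$.
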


\begin{proof} There are two cases to be considered, each of which results in a position of the form (2) of Remark~\ref{zeros}. If $b+c\ge a$, we fix $a$ and $b$, and let $c'=a-b$, $d'=b$, $e'=a$, $f'=0$, $g'=a$, and $h'=0$ to obtain $\p'=(a,b, a-b,b,a,0,a,0) \in S$. Otherwise, we fix $b$ and $c$ and let $a'=b+c$, $d'=b$, $e'=b+c$, $f'=0$, $g'=b+c$, and  $h'=0$, which yields $\p'=(b+c,b,c,b,b+c,0,b+c,0) \in S$.
\end{proof}

\begin{lemma}(MaxMin Lemma)\label{max-min} If for a position $\p = (a,b,c,d,e,f,g,h)$, either
\begin{equation}\label{maxmin1eq} \max(b,c,b+c-e) \le \min(f,h, a+b,b+c, (b+c+d)/2)
\end{equation}
or
\begin{equation}\label{maxmin2eq}\max(c,d, c+d-a) \le \min(f,h, c+d, d+e, (b+c+d)/2)
\end{equation}
holds,  then $p$ is solved . 
\end{lemma}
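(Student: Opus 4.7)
The plan is to exhibit, for each alternative in the hypothesis, an explicit legal move from $\p$ to a position $\p' \in S$. In both cases two adjacent coordinates of $\p$ will be kept fixed, while the remaining six (forming a single arc of consecutive stacks) are chosen, parametrized by a single value $x$, so that $\p'$ fits the template $(0, x, a_1, b_1, e_S, b_2, a_2, x)$ of $S$.

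Under hypothesis~\eqref{maxmin1eq}, I would fix $b$ and $c$, play on the six consecutive stacks at positions $4, 5, 6, 7, 8, 1$, and identify the pair $(b, c)$ with $(b_1, e_S)$ in the template. Concretely, for a parameter $x$, set
\[ \p' = (x - b,\ b,\ c,\ 2x - b - c,\ b + c - x,\ x,\ 0,\ x). \]
Reading $\p'$ clockwise starting from its $0$ at position $7$ recovers the template with $a_1 = x - b$, $b_1 = b$, $e_S = c$, $b_2 = 2x - b - c$, $a_2 = b + c - x$. By construction $a_1 + b_1 = a_2 + b_2 = x$ and $a_1 + a_2 = c$, so $\p' \in S$ as soon as $x \ge c$ (since then $e_S = \min(x, a_1 + a_2) = c$). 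The legality constraints $0 \le p_i' \le p_i$ at the played-on positions collapse, after using $x \ge \max(b, c) \ge (b+c)/2$ to secure $b_2 \ge 0$, to the single double inequality
\[ \max(b, c, b + c - e) \le x \le \min(f, h, a + b, b + c, (b + c + d)/2), \]
which is exactly~\eqref{maxmin1eq}. Thus any integer $x$ in this interval yields a valid target, and the move is non-trivial because $\p \notin S$ forces $\p' \ne \p$.

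Under hypothesis~\eqref{maxmin2eq} I would use the shift-by-one analogue: fix $c$ and $d$, play on positions $5, 6, 7, 8, 1, 2$, identify $(c, d)$ with $(e_S, b_2)$, and set
\[ \p' = (c + d - x,\ 2x - c - d,\ c,\ d,\ x - d,\ x,\ 0,\ x). \]
The verification is formally identical and collapses to
\[ \max(c, d, c + d - a) \le x \le \min(f, h, c + d, d + e, (b + c + d)/2), \]
which is~\eqref{maxmin2eq}.

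The main obstacle is spotting these two target families; once the parametrizations above are written down, the initially opaque expressions $(b+c+d)/2$, $b+c-e$, $a+b$, $c+d-a$, and $d+e$ in the statement reveal themselves as precisely the legality bounds on $\p'$, and the proof reduces to the observation that each hypothesis is exactly the non-emptiness of the corresponding interval for $x$.
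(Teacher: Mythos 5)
Your proposal is correct and follows essentially the same route as the paper: both fix the adjacent pair ($b,c$ or $c,d$), zero out $g$, parametrize the target $(m-b,b,c,2m-b-c,b+c-m,m,0,m)$ (resp.\ its shifted analogue) by a single value, and read off the hypothesis as the non-emptiness of the interval of legal parameter values. Your additional remarks---that $(b+c)/2$ drops out of the maximum and that $\p\notin S$ guarantees the move is non-trivial---match or slightly sharpen the paper's own presentation.
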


\begin{proof} Let $g'=0$, and keep either $b$ and $c$ or $c$ and $d$ fixed. We first consider the case where $b$ and $c$ are fixed. In order for a legal move to exist,  the following conditions have to be satisfied, where $m$ (the maximum adjacent to the zero of the new position) is a quantity to be determined:
$$ f'=h'=m; \quad a'+b =m; \quad d'+e'=m; \quad \text{ and } c=\min(m,a'+e')=a'+e'.$$

Note that the last equality is an additional assumption used to determine all the values for the new position $\p'$.  If these inequalities can be solved for $m$, then there is a legal move to $\p'=(m-b,b,c, 2m-b-c, b+c-m,m,0,m)$ for each value of $m$ that satisfies the conditions. (Note that we used the assumption that  $c=a'+e'$  to compute $e'$.) All of these entries have to be non-negative, and smaller than the corresponding entries in $\p$. Thus we get two conditions for each of the stacks of $\p'$, which translate into conditions for $m$ as follows:
\begin{eqnarray*}
\begin{tabular}{lll}
$0 \le m \le f$ &  $\Rightarrow$ & $0 \le m \le f$  \\
$0\le m \le h$ & $\Rightarrow$ & $0\le m \le h$ \\
$0\le m-b \le a$ &  $\Rightarrow$ & $b \le m \le a+b$ \\
$c \le m$ & $\Rightarrow$ & $c \le m$ \\
$0 \le 2m-b-c \le d$ &  $\Rightarrow$ & $(b+c)/2 \le m \le (b+c+d)/2$ \\
$0 \le b+c-m \le e$ &  $\Rightarrow$ & $b+c-e \le m \le b+c$ \\
\end{tabular}  
\end{eqnarray*}

Combining these conditions yields $$\max(b,c,(b+c)/2, b+c-e) \le m \le \min(f,h, a+b, b+c, (b+c+d)/2).$$ We can further simplify this condition by recognizing that the average  $(b+c)/2$ is always smaller than either of $b$ and $c$, and thus the average $(b+c)/2$ can be taken out of the maximum requirement, yielding \eqref{maxmin1eq}. The second case, fixing $c$ and $d$, results in Equation \eqref{maxmin2eq} by symmetry across the line through $g$ and $c$.  In this case, $\p'= (c+d-m, 2m-c-d,c,d, m-d,m,0,m)$ is a legal move.
\end{proof}

\begin{example} Suppose that $\p=(4,12,11,9,10,16,1,17)$. Then \eqref{maxmin1eq} is satisfied as $\max(b,c,b+c-e)=\max(12,11,13) =13 \le 16= \min(16,17,16,23,16)=\min(f,h, a+b,b+c, (b+c+d)/2)$. Then for $m = 13, 14, 15, 16$, $\p'=(m-12,12,11, 2m-23, 23-m,m,0,m)$ is a legal move.
\end{example}

We now provide a final lemma which deals with the remaining cases which are small in number, but unfortunately do not fall into a neat unifying structure. 

\begin{lemma} (Clean-up Lemma) \label{cleanup} A position $\p=(a,b,c,d,e,f,g,h)$ for which $ f\ge \min(b,h) \ge \max(d,e)$ and $f\ge c\ge e\ge g$, $d\ge g$ and $a \le \min(c,d)$ is solved.
\end{lemma}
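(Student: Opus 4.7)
The plan is to construct an explicit legal move from $\p = (a,b,c,d,e,f,g,h)$ to a position $\p' \in S$. Under the hypotheses, $g$ is a near-global minimum of $\p$: the chain $\min(b,h) \ge e \ge g$ yields $b, h \ge g$, and we also have $c, d, e, f \ge g$. It is therefore natural to place the ``$0$'' of $\p'$ at position $g$, which puts the two $x$-stacks of $\p'$ at the adjacent positions $f$ and $h$, and the ``middle'' stack (opposite $g$) at position $c$. With this alignment the defining conditions for $\p' \in S$ translate to
\begin{equation*}
g' = 0, \quad h' = f' = x, \quad a' + b' = x, \quad d' + e' = x, \quad c' = \min(x, a' + e'),
\end{equation*}
for some $x \ge 0$, subject to $0 \le p'_i \le p_i$ componentwise and to the legality constraint that the two fixed stacks are adjacent.

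A legal move must leave two adjacent stacks fixed; since $g' = 0$ forces $g$ to be played whenever $g > 0$, the candidate fixed pairs are $(a,b),\,(b,c),\,(c,d),\,(d,e),\,(e,f),\,(h,a)$. I would organize the proof as a case analysis on which pair to fix, with the natural candidate constructions being: fix $(h,a)$ with $x = h$ (applicable when $f \ge h$); fix $(e,f)$ with $x = f$ (applicable when $h \ge f$); fix $(a,b)$ with $x = a+b$, or fix $(d,e)$ with $x = d+e$ (when these sums are suitably small); or fix a $c$-adjacent pair $(b,c)$ or $(c,d)$, keeping $c$ unchanged and enforcing $c = \min(x, a'+e')$ via the branch $a' + e' = c$ with $x \ge c$.

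For each chosen fixed pair and $x$, the target $\p'$ is determined up to one free parameter (controlling the split of $x$ between $d'$ and $e'$), which we use to meet the $c$-condition. For instance, the construction from fix $(h,a)$ with $x = h$ yields a target of the form $\p' = (a,\, h-a,\, c',\, d',\, h-d',\, h,\, 0,\, h)$, and one checks $0 \le p'_i \le p_i$ and the $S$-conditions via a short chain of inequalities drawn from the hypotheses (for example, $h - a \ge 0$ uses $h \ge d \ge a$, while $c' \le c$ is arranged by taking either the ``$x$-branch'' when $h \le c$ or the ``$a' + e'$-branch'' via $e' = h - d$ when $h + a \le c + d$). Each individual sub-case is a short verification.

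The main obstacle is showing that the union of these sub-cases covers every position satisfying the hypothesis. The hypothesis of the Clean-up Lemma was crafted to isolate exactly what remains after the Valley, Trapezoid, double-min, and MaxMin Lemmas have been applied, so the residual set should admit a clean partition matching our constructions. I expect the heart of the proof to be a delicate branching analysis on the orderings among $f,\, h,\, a+b,\, d+e,\, c$ and on sign conditions such as $c - h$ and $c + d - a - h$, showing that in every consistent ordering at least one of the above constructions produces a legitimate $\p' \in S$.
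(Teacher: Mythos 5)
Your setup is sound and your candidate moves are essentially the right ones --- the paper's own proof likewise constructs explicit options in $S$ with $g'=0$ and $f'=h'=m=\min(h,f,a+b,d+e)$ in its main case, branches on which of those four quantities achieves the minimum, and handles the obstruction $a+e>c$ by fixing $c$ and forcing $a'+e'=c$ (via Lemma~\ref{max-min}). But what you have written is a plan, not a proof: the entire content of this lemma \emph{is} the ``delicate branching analysis'' that your final paragraph defers. Until every consistent ordering of $f$, $h$, $a+b$, $d+e$, $c$ (together with the sign conditions you mention) is matched to a verified legal move, nothing is established. Moreover, at least one sub-case you do sketch is incomplete: fixing $(h,a)$ with $x=h$ requires $d'+e'=h$ with $d'\le d$ and $e'\le e$, hence $h\le d+e$ --- a feasibility condition that your stated branch conditions ($h\le c$, or $h+a\le c+d$) do not address. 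When $h>d+e$ you are forced to $x=d+e$ and a different fixed pair, which is precisely why the paper's Table~\ref{cleanupsum} splits on $m=h$ versus $m=d+e$ versus $m=f$ versus $m=a+b$.

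A second, more structural concern: you commit to placing the zero of $\p'$ at stack $g$ in every case. The paper does not --- in its first two cases (when $\min(h,b)\ge d+e-g$, or when $\min(h,b)=h<d+e-g$) it places the zero at stack $a$, keeps $e$ as the middle stack, and uses $g\le e$ and $d\ge g$ to verify $e=\min(m,(e-g)+g)$. It may well be that a zero-at-$g$ construction suffices for every position satisfying the hypotheses, but that is an additional claim requiring its own covering argument, and you offer none. As it stands, the proposal assembles the right toolbox but leaves the only hard part --- showing the sub-cases exhaust the hypothesis region --- unproved.
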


\begin{proof} 
Of the given inequalities, the only  one that does not unambiguously fix the relative order of stack heights  is  $f\ge \min(b,h)$. Several subcases arise which are summarized in Table~\ref{cleanupsum}. In most cases, we will just provide a position $\p'\in S$, and the reader can check that the given position $\p'$ is a legal move, that is $p_i\ge p'_i\ge 0$, using the inequalities of Lemma~\ref{cleanup} and the inequalities of the given subcase. We will provide some details of the proof and remark on the underlying structure for the case $ \min(b,h)=b< d+e-g$. In the first case,  the condition $a+e \le c$ assures that  the choice $c'=\min(m,a+e')\le a+e$ is legal. In each case, $g'=0$ and $f'=h'=m=\min\(h,f,a+b,d+e)$. The other values of $\p'$ are adjusted depending on the value of $m$ so that the resulting position $\p'\in S$. 

In the second case, when $a+e > c$, we apply Lemma~\ref{max-min}. Using~\eqref{maxmin1eq}, a position $\p$ is solved if $\max(b,c,b+c-e) \le \min(f,h, a+b,b+c, (b+c+d)/2)$. Since $b \ge \max(d,e)\ge e$ and $c\ge e$, we have that $b+c-e\ge b$ and $b+c-e\ge c$, so $\max(b,c,b+c-e)=b+c-e$. Thus we have to show that $b+c-e\le \min(f,h, a+b,b+c, (b+c+d)/2)$. But $b+c-e\le  (b+c+d)/2$ is logically equivalent to $ b+c-e \le d+e $, so the condition becomes $m=b+c-e\le m^*=\min(f,h, a+b,b+c, d+e)$. If $m^*=a+b$ or $m^*=b+c$, then $m^*>m$ and ~\eqref{maxmin1eq} is satisfied. If $m^*= f$ or $m^*=d+e$, then for the case $m \le m^*$ ~\eqref{maxmin1eq} is true; if on the other hand $m > m^*$, then we cannot apply Lemma~\ref{max-min}, but give a position $\p'$ using other methods. Finally, if $m^*=h$, we apply ~\eqref{maxmin2eq} of Lemma~\ref{max-min}  which asserts that a position is solved if $\hat{m}=\max(c,d, c+d-a) \le \tilde{m}=\min(f,h, c+d, d+e, (b+c+d)/2)$. Since $a \le \min(c,d)$, we obtain that $\hat{m} =c+d-a$. Also,  $e \le c$ implies $d+e \le d+c$, and together with  $m^*=h$, we have that $h \le \min(f,d+e,d+c)$, which implies that  $\tilde{m}=\min(h,  (b+c+d)/2)$. Furthermore, $c+d-a\le (b+c+d)/2$ is logically equivalent to $c+d-a\le a+b$. Using $m^*=h$ once more, we obtain that $\tilde{m}=\min(h,  a+b)=h$. Thus, \eqref{maxmin2eq} holds if $\hat{m}=c+d-a\le m^*=h$, and $\p'= (c+d-\hat{m}, 2\hat{m}-c-d,c,d, \hat{m}-d,\hat{m},0,\hat{m})=(a,c+d-2a,c,d,c-a,\hat{m},0,\hat{m})$ with $\hat{m}=c+d-a$ is a legal move. Otherwise, if $\hat{m} > m^*$, we can move to $\p'=(a,h-a,a+h-d,d,h-d,h,0,h) \in S$.
\end{proof}

Now that we have all the intermediate results, we can complete the proof of Theorem~\ref{8-6}.  Lemmas~\ref{trapezoid}, ~\ref{dmin1},~\ref{dmin2}, and~\ref{cleanup} are all lemmas of the form: ``If a given set of conditions on the relative sizes of individual stacks holds, then the position is solved". By contrast, Lemmas~\ref{valley} and~\ref{max-min}  were mere tools to prove the other lemmas. Note that there are a total of $7!$ ways to arrange the relative sizes of stacks around the circle, and we need to divide by $2$ to account for the reflection symmetry. Thus there are $7!/2=2520$ different size configurations to be considered. Each 
 of these arrangements is, under a suitable rotation or reflection, covered by at least one of the four main lemmas, as checked by a Visual Basic program that can run on any Excel workbook. (The code can be obtained from \url{http://www.calstatela.edu/faculty/sheubac/}.) It is quite interesting to see how the cases distribute among the four lemmas that settle Theorem~\ref{8-6}. Clearly, Lemma~\ref{trapezoid} is the most powerful, as it covers $2248$ out of the $2520$ cases, roughly $89\%$. Lemma~\ref{cleanup} on the other hand covers only 62 cases, and was specifically designed to cover the 42 cases not already covered by the other lemmas, resulting in the very tedious conditions of Lemma~\ref{cleanup}.  Figure~\ref{dist} shows the contributions of the four lemmas to the proof of Theorem~\ref{8-6}. 
\end{proof} 

\vspace{1cm}

\begin{figure}[htb] \label{dist}
 \begin{center}
 { \psset{xunit=0.5,yunit=0.5}
\begin{pspicture}(19,12)
 \rput(3.5,12.){\cb 1st ``Double Min'' Lemma}
  \rput(6,1){\cg 2nd ``Double Min'' Lemma}
   \rput(12,11){\cp ``Trapezoid'' Lemma}
   \rput(17,4){\color{orange} ``Clean-up'' Lemma}
  \rput(16,1.5){\bf Total = 2520}  
 \rput(3,8){$102$}
 \rput(8,7.5){$452$}
  \rput(5,5.3){$32$}
 \rput(8,5.5){$1100$}
 \rput(10.5,5){$520$}
 \rput(11.5,8){$156$}
 \rput(14.2,7){$20$}
 \rput(17,8){$42$}
 \rput(8,3){$94$}
\psellipse[linecolor=mygreen](7,4)(6,2.5)
\psellipse[linecolor=blue](5.75,7.5)(5,3.75)
\psellipse[linecolor=purple](11,7)(5,3)
\psellipse[linecolor=orange](16,7.5)(3,2.5)
\end{pspicture}}
\end{center}
\caption{The contributions of the various lemmas to the proof of Theorem~\ref{8-6}}
\end{figure}
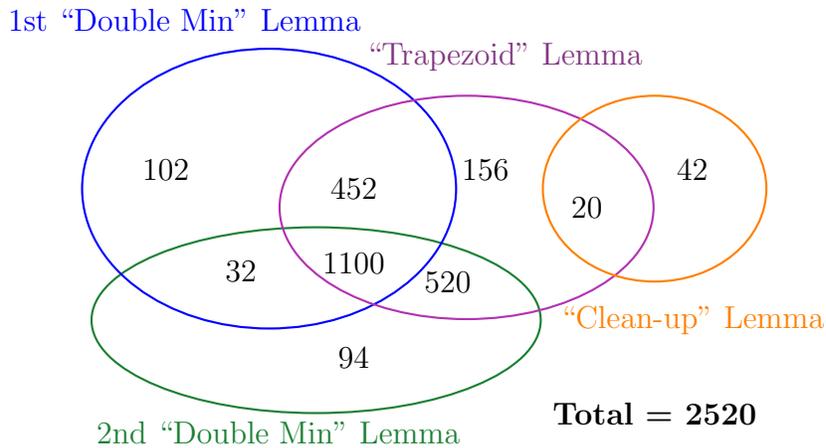

\begin{landscape}
\begin{table}[htb]
\begin{center}
\begin{tabular}{llll}
\multicolumn{3}{c|}{Conditions} &\multicolumn{1}{c}{$\p'$}\\ \hline
\multicolumn{1}{l|}{$\min(h,b)\ge d+e-g$}   & & &  \multicolumn{1}{|l}{  $(0,m,e-g,d,e,m-g,g,m)$}\\ 
\multicolumn{1}{l|}{}   & & &  \multicolumn{1}{|l}{$m=d+e-g$}\\ 
\hline
\multicolumn{1}{l|}{$\min(h,b)=h <  d+e-g$}   & & &  \multicolumn{1}{|l}{  $(0,h,e-g,h+g-e,e,h-g,g,h)$}\\ \hline
$\min(h,b)=b <  d+e-g$ &  \multicolumn{1}{|l}{ $a+e \le c $} &  \multicolumn{1}{|l}{ $m=a+b$} &   \multicolumn{1}{|l}{ $(a,b,c',d',e',m,0,m)$}\\
&  \multicolumn{1}{|l|}{$m=\min(h,f,a+b,d+e)$} &\multicolumn{1}{l|}{}  & $d'+e'=m$, $c'=\min(m,a+e')$ \\ \cline{3-4}
&  \multicolumn{1}{|c|}{} &  \multicolumn{1}{l} {$m=d+e$}&  \multicolumn{1}{|l}{ $(a',b',c',d,e,m,0,m)$}\\
&  \multicolumn{1}{|c|}{} &  \multicolumn{1}{l|} {}& $a'+b'=m$, $c'=\min(m,a+e')$\\ \cline{3-4}
&  \multicolumn{1}{|c|}{} & \multicolumn{1}{l} {$m=f$}&  \multicolumn{1}{|l}{ $(a',b',c',m-e,e,m,0,m)$}\\ 
&  \multicolumn{1}{|c|}{} &  \multicolumn{1}{l|} {}& $a'+b'=m$, $c'=\min(m,a+e')$\\ \cline{3-4}
&  \multicolumn{1}{|c|}{} &  \multicolumn{1}{l} {$m=h$}&  \multicolumn{1}{|l}{ $(a,m-a,c',d',e',m,0,m)$}\\
&  \multicolumn{1}{|c|}{} &  \multicolumn{1}{l|} {}& $d'+e'=m$, $c'=\min(m,a+e')$\\ \cline{2-4}
&  \multicolumn{1}{|l}{ $a+e > c $} &  \multicolumn{1}{|l}{ $m^*=a+b$ or } &   \multicolumn{1}{|l}{ $(c-e,b,c, m-e, e,m,0,m)$}\\
&  \multicolumn{1}{|l}{ $m^*=\min(h,f,a+b,b+c,d+e)$} &  \multicolumn{1}{|l}{ $m^*=b+c$ or} &   \multicolumn{1}{|l}{ }\\
&   \multicolumn{1}{|l|} {$m=b+c-e$}&  \multicolumn{1}{l}{$m\le m^*=f $ or} &   \multicolumn{1}{|l}{ }\\ 
&   \multicolumn{1}{|l|} {}&  \multicolumn{1}{l}{$m\le m^*=d+e $} &   \multicolumn{1}{|l}{ }\\ \cline{3-4}
&   \multicolumn{1}{|l|} {}&  \multicolumn{1}{l}{$m>m^*=f $} &   \multicolumn{1}{|l}{$(f-b,b,e+f-b,f-e,e,f,0,f)$ }\\ \cline{3-4}
&   \multicolumn{1}{|l|} {}&  \multicolumn{1}{l}{$m>m^*=d+e$} &   \multicolumn{1}{|l}{$(a',b',c',d,e,m^*,0,m^*)$ }\\ 
&   \multicolumn{1}{|l|} {}&  \multicolumn{1}{l}{} &   \multicolumn{1}{|l}{$b'=\min(b,m^*),a'+b'=m^*$ }\\ 
&   \multicolumn{1}{|l|} {}&  \multicolumn{1}{l}{} &   \multicolumn{1}{|l}{$c=\min(m^*,e+a')$ }\\  \cline{3-4}
&   \multicolumn{1}{|l|} {$\hat{m}=c+d-a$}&  \multicolumn{1}{l}{$\hat{m}\le m^*=h$} &   \multicolumn{1}{|l}{$(a,c+d-2a,c,d,c-a,\hat{m},0,\hat{m})$ }\\  \cline{3-4}
&   \multicolumn{1}{|l|} {}&  \multicolumn{1}{l}{$\hat{m}>m^*=h$} &   \multicolumn{1}{|l}{$(a,h-a,a+h-d,d,h-d,h,0,h)$ }\\ 
\hline
\multicolumn{4}{c}{}\\
\end{tabular}
\end{center}
\caption{Legal moves for the subcases of Lemma~\ref{cleanup} \label{cleanupsum}}
\end{table}

\end{landscape}

\section{Generalizations}\label{GenRes}

Even though we have given results for a number of values of $n$, it would  clearly be more satisfying to obtain general results that go beyond the ``extreme cases'' $\c(n,1)$, $\c(n,n)$, and $\c(n,n-1)$.  Ehrenborg and Steingr{\'{\i}}msson~\cite{EhrSte1996} and Horrocks~\cite{Hor2010} investigated a more general set of games, namely playing Nim on a simplicial simplex, and obtained structural results for the set of losing positions. In particular, the results in~\cite{EhrSte1996} contain $\c(5,2)$ and $\c(5,3)$, while~\cite{Hor2010} contains $\c(6,3)$ as a special case. The question then becomes whether these results solve $\c(n,k)$ for other values of $n$. We consider this to be unlikely, as the structural results in~\cite{EhrSte1996} and~\cite{Hor2010} are linear in nature while our results for $\c(6,2)$, $\c(6,4)$ and $\c(8,6)$ contain non-linear elements like digital sum and minimum. Nevertheless, an investigation of the structure of the circuits of $\c(n,k)$,  which are at the heart of the results of Ehrenborg, Steingr{\'{\i}}msson, and Horrocks, might yield additional insights. We start by defining the necessary terminology, adapting the definitions given in~\cite{EhrSte1996} for the special case of circular Nim.

\begin{definition} \label{simp comp} A {\em simplicial complex} $\Delta$ on a finite set of nodes $V=\{1,2,\ldots,n\}$ is a collection of subsets of $V$ such that $\{v\} \in \triangle$ for every $v \in V$, and $B \in \triangle$ whenever $A \in \triangle$ and $B \subseteq A$. The elements of $\Delta$ are called {\em faces} and represent the choices for the stacks from which  a player can take tokens.  A face that is maximal with respect to inclusion is called a {\em facet}. A minimal (with respect to  inclusion) non-face of $\Delta$ is called a {\em circuit}. The {\em size of a circuit} is  the number of nodes in the circuit.
\end{definition}

For circular Nim $\c(n,k)$, the simplicial complex is given by 
$$\triangle =\bigcup_ {i =1}^n \bigcup _{j=0}^{k-1}\{i,( i+1), \ldots, (i+j)\}\imod{n}.$$ The facets are the sets consisting of $k$ consecutive vertices, while the structure of circuits  is harder to describe in general. (They are not the sets consisting of $k+1$ consecutive vertices.) However, for $k=2$ we can explicitly describe the circuits and can enumerate them as well since the structure of the circuits is very simple in this case. 

\begin{lemma} The circuits of $\c(n,2)$ are of the form $\{i,j\}$ with  $i=1,2, \ldots, n, j=i+2,\ldots, i-2 \imod{n}$. The number of circuits of $\c(n,2)$ is given by $n(n-3)/2$.
\end{lemma}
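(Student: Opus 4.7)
The plan is to handle the characterization of circuits and their count separately, treating $n\ge 4$ as the relevant case (for $n=3$ all pairs are consecutive, so the facets already exhaust the possibilities, and $\c(3,2)=\c(n,n-1)$ is covered by Theorem~\ref{T:easy}; in this case $n(n-3)/2=0$ matches the claim that no circuit of the stated form exists).

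First, I would unpack what a face of $\triangle$ looks like for $\c(n,2)$. By the explicit description $\triangle=\bigcup_{i=1}^n\bigcup_{j=0}^{1}\{i,i+1,\ldots,i+j\}\imod{n}$, the facets are exactly the consecutive pairs $\{i,i+1\imod n\}$, and therefore the faces are $\emptyset$, all singletons $\{i\}$, and the consecutive pairs. From this it is immediate that any pair $\{i,j\}$ with $j\not\equiv i\pm 1\pmod n$ is a non-face whose proper subsets are all faces; hence every such pair is a circuit.

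Second, I would show that no minimal non-face has size $\ge 3$ when $n\ge 4$. The key observation is that if $A\subseteq V$ has $|A|\ge 3$ and every 2-element subset of $A$ is a face, then picking three distinct elements $a,b,c\in A$ forces each of the pairs $\{a,b\},\{b,c\},\{a,c\}$ to be consecutive modulo $n$. Writing this out, the three pairwise differences mod $n$ all lie in $\{1,n-1\}$ and must sum to $0\pmod n$; a short case check on the signs of these differences shows this is possible only when $n\le 3$. Thus for $n\ge 4$, any $A$ with $|A|\ge 3$ contains a non-consecutive pair, which is already a non-face, so $A$ is not minimal. Combined with the previous step, this shows the circuits are exactly the non-consecutive pairs.

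Finally, the count is a one-line subtraction: there are $\binom{n}{2}=n(n-1)/2$ unordered pairs of distinct vertices, and exactly $n$ of them (one per facet) are consecutive. Hence the number of circuits is
\[
\binom{n}{2}-n=\frac{n(n-1)}{2}-n=\frac{n(n-3)}{2}.
\]
There is no real obstacle here; the only step that needs any care is the arithmetic claim that three pairwise-consecutive vertices on a cycle of length $\ge 4$ cannot exist, which is a small finite check on the signs of the cyclic differences.
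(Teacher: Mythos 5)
Your proof is correct and follows essentially the same route as the paper: identify the faces of $\c(n,2)$ as the empty set, singletons, and consecutive pairs, conclude that the circuits are exactly the non-consecutive pairs, and count them. You are in fact slightly more careful than the paper, which silently assumes all circuits are pairs, whereas you explicitly rule out minimal non-faces of size at least three (and flag the degenerate case $n=3$, where the unique circuit $\{1,2,3\}$ is not of the stated form); your count $\binom{n}{2}-n$ is just a repackaging of the paper's ``$n-3$ choices per stack, divided by $2$.''
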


\begin{proof} By definition, a circuit is a set of stacks on which play is not allowed, but all subsets of the circuit are allowed choices for the stacks. Thus, for any stack $i$, all but the two pairs consisting of $i$ and its immediate neighbors, $i+1$ and $i-1$ form a circuit. There are $n-3$ such choices for each of the $n$ stacks; division by $2$ takes into account symmetry. 
\end{proof}

We now prove a result on the size of the circuits of $\c(n,k)$ for any $n$ and $k$. Before we can do so, we need a few definitions. In what follows we always assume that vertices are listed in increasing (clockwise) order and that indices are given modulo $n$.

\begin{definition} An {\em arc of length  $m$} with {\em end vertices} $i$ and ${i+m}$ is a set of $m+1$ consecutive vertices $\{i,{i+1},\ldots,{i+m}\} \subseteq \{1,2,\ldots,n\}$. We denote the arc from  $i$ to ${i+m}$ by $\arc{i}{(i+m)}$.  With each set $V$ of vertices, we associate two measurements: the {\em size (= number of elements)} of the set $V$, denoted by $|V|$, and the {\em span size} $sp(V)$, which is the length of the smallest arc containing $V$. 
\end{definition}

Note that $sp(V) \ge |V|-1$, with equality exactly when the elements of $V$ are consecutive vertices.

\begin{remark} \label{arc}
\begin{enumerate}
\item The smallest arc containing  a set is not necessarily unique, but its length is. For example, if $n$ is even, and the set $V$ consists of two diagonally opposite vertices, then we have two arcs of length $n/2$.
\item If an arc that covers a given set $V$ is minimal, then the arc's two end vertices belong to $V$; for if not, one could obtain a smaller arc that covers $V$ by simply removing the end vertex that does not belong to $V$, thus reducing the length of the arc.
\item In the $\c(n,k)$ game, every face is contained in an arc of length at most $k-1$, and every set with span size at most $k-1$ is a face.
\end{enumerate}
\end{remark}
 
\begin{definition} Given a set $V=\{v_1, v_2, \ldots, v_m\}$ (where the $v_i$ appear in clockwise order), the {\em distance set} $D_V=\{d_1,d_2, \ldots, d_m\}$ is the set of lengths of the arcs $\arc{v_i}{v_{i+1}}$, that is $ d_i = v_{i+1}- v_i$ for $i=1,\ldots,m-1$ and $d_m=v_1+n-v_m$.
\end{definition}

Note that the sum of the distances of any distance set of $V\subseteq \{1,2,\ldots,n\}$ equals $n$, the total number of vertices of a $\c(n,k)$ game, { and that each set of $m$ distances  $d_1, d_2, \ldots, d_{m}$ with $0 < d_i\le n$ and $\sum_{i=1}^m d_i =n$ uniquely describes an $m$-subset of $\{1,2,\dots,n\}$  (up to rotation).}
 
\begin{example} Let $V=\{2,5,6\}$ and $n=8$. Then $d_1=3$, $d_2=1$, and $d_3=4$, so  $D_V=\{3, 1, 4\}$. Adding the distances gives $3+1+4 = 8$.
\end{example}

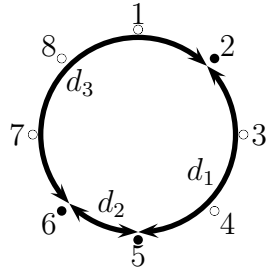
\begin{figure}[htp]
 \begin{center}
  \begin{pspicture}(-1.8,-1.8)(1.8,1.8)
\psset{unit=0.7cm}
  \psdots*[dotstyle=o](0,2)(2,0)(1.45,-1.45)(-2,0)(-1.45,1.45)
  \psdots*[dotstyle=*](1.45,1.45)(0,-2)(-1.45,-1.45)
 
  \psarc[linewidth=2pt]{<->}(0,0){1.85}{270}{45}
  \psarc[linewidth=2pt]{<->}(0,0){1.85}{225}{270} 
  \psarc[linewidth=2pt]{<->}(0,0){1.85}{45}{225}
  \rput(1.2,-0.7){$d_1$}  \rput(-0.5,-1.3){$d_2$}  \rput(-1.1,1){$d_3$}
  \rput(0,2.3){$1$} \rput(0,-2.3){$5$} \rput(-2.3,0){$7$} \rput(2.3,0){$3$} \rput(1.7,1.7){$2$} \rput(1.7,-1.7,){$4$}
  \rput(-1.7,-1.7,){$6$}\rput(-1.7,1.7,){$8$}

  \end{pspicture}
\caption{Distances between vertices.}\label{distances}
\end{center}
\end{figure}

\begin{theorem} \label{circuit cond} A set of vertices $V=\{v_{1}, v_{2}, \ldots, v_{\ell}\}$ is a circuit of $\c(n,k)$ if and only if the  following conditions hold on its distance set $D_V$:
\begin{enumerate} 
\item $d_i+d_{i+1} > n-k$   
and 
\item $d_i \le n-k$
\end{enumerate}
for $ i=1,2,\ldots,\ell$, where $d_{\ell+1}= d_1$.
\end{theorem}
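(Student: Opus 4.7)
The plan is to translate both conditions into statements about the span size of $V$, using the characterization of faces from Remark~\ref{arc}(3): a set $W$ is a face of $\c(n,k)$ iff $sp(W)\le k-1$. So $V$ is a circuit iff (a) $sp(V)\ge k$ and (b) $sp(V\setminus\{v\})\le k-1$ for every $v\in V$.

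First I would establish the key identity
\[
sp(V)=n-\max_{i} d_i.
\]
The argument: by Remark~\ref{arc}(2) any minimal arc containing $V$ has both endpoints in $V$, so it is of the form $\arc{v_i}{v_{i-1}}$ (going clockwise) for some $i$, i.e., the arc that ``skips'' exactly one gap $d_{i-1}$. Its length is $\sum_{j\ne i-1}d_j=n-d_{i-1}$, and this is minimized precisely when the skipped gap is the largest one. This identity immediately yields that condition (2), $d_i\le n-k$ for all $i$, is equivalent to $sp(V)\ge k$, i.e., to $V$ being a non-face.

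Next, assuming (2) holds (so $V$ is a non-face), I would compute what happens to the distance set when a single vertex $v_{i+1}$ is removed: the gaps $d_i$ and $d_{i+1}$ coalesce into one gap $d_i+d_{i+1}$, while every other $d_j$ is preserved. Hence
\[
sp(V\setminus\{v_{i+1}\})=n-\max\!\bigl(d_i+d_{i+1},\ \max_{j\ne i,i+1}d_j\bigr).
\]
Under condition (2), every unchanged gap satisfies $d_j\le n-k$, so the outer maximum exceeds $n-k$ iff $d_i+d_{i+1}>n-k$. Therefore $sp(V\setminus\{v_{i+1}\})\le k-1$ iff $d_i+d_{i+1}>n-k$, and requiring this for all $i$ is exactly condition (1).

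Combining the two steps, $V$ is a circuit iff (2) holds (giving a non-face) and (1) holds (giving minimality). I do not anticipate a genuine obstacle here; the only place requiring some care is the span/distance identity (handling the cyclic indexing and the case where the minimal covering arc is not unique, which follows from Remark~\ref{arc}(1)) and the verification that for a non-face the ``new'' maximum gap after a deletion indeed comes from the merged pair rather than from an untouched $d_j$.
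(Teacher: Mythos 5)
Your proposal is correct and follows essentially the same route as the paper's proof: both reduce the two circuit conditions to span-size statements via the identity $sp(V)=\min_i\{n-d_i\}=n-\max_i d_i$ and the observation that deleting a vertex merges its two adjacent gaps. The only difference is cosmetic (you phrase the spans as $n$ minus a maximum of gaps, the paper as a minimum of arc lengths), so no further comment is needed.
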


\begin{proof} $``\Rightarrow"$ Suppose $V$ is a circuit. Then  by Remark~\ref{arc} (since $V$ is not a face),
$$ k \le sp(V)=\min_i\{n-d_i\}\le n-d_i \quad \forall i=1\ldots,\ell,$$
so (2) is satisfied. Also, $V\backslash\{v_i\}$ is a face for all $i$, which implies that $sp(V\backslash\{v_i\})\le k-1$. Since $V$ is not a face, the minimal arc covering $V\backslash\{v_i\}$ has to be the arc $\arc{v_{i+1}}{v_{i-1}}$ (otherwise, the minimal arc would also include $v_i$, and thus $V$ would be a face). Therefore, 
$$k-1 \ge sp(V\backslash\{v_i\})=|\arc{v_{i+1}}{v_{i-1}}|=n-d_i-d_{i+1},$$
so (1) holds. 

$``\Leftarrow"$ Assume conditions (1) and (2) hold. Since $d_i \le n-k$, $$sp(V)=\min_i|\arc{v_{i+1}}{v_{i}}|=\min_i\{n-d_i\}\ge k,$$ and so $V$ is not a face. Also, 
\begin{eqnarray*}
sp(V\backslash\{v_i\})&=&\min_{j \ne i}\{|\arc{v_{j+1}}{v_j}|,|\arc{v_{i+1}}{v_{i-1}}|\}\\
&=&\min_{j \ne i}\{\underbrace{n-d_j}_{\ge k},\underbrace{n-d_i-d_{i+1}}_{<k}\}=n-d_i-d_{i+1}\le k-1,
\end{eqnarray*}
so $V\backslash\{v_i\}$ is a face for every $i$, and thus $V$ is a circuit. This completes the proof.
\end{proof}

\begin{theorem} \label{circuit length} For $\c(n,k)$ with $n>1$ and $1<k<n$, a circuit of length $\ell$ exists if and only if 
\begin{equation}\label{circlen} \frac{n}{s} \le \ell \le \frac{2n}{s+1}
\end{equation}
where $s=n-k$.
\end{theorem}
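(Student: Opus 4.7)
The plan is to apply Theorem~\ref{circuit cond} to recast the question purely in terms of distance sets: a circuit of length $\ell$ exists if and only if there is a cyclic sequence of positive integers $(d_1,\ldots,d_\ell)$ with $\sum_{i=1}^\ell d_i = n$, $d_i \le s$, and $d_i + d_{i+1} \ge s+1$ (indices modulo $\ell$). Both bounds then reduce to counting questions about such sequences.

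For the necessity of the bounds, both fall out by summing the constraints. Summing $d_i \le s$ over $i$ gives $n \le \ell s$, hence $\ell \ge n/s$. Summing $d_i + d_{i+1} \ge s+1$ cyclically gives $2n \ge \ell(s+1)$, hence $\ell \le 2n/(s+1)$.

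For sufficiency, I would exhibit an explicit construction. Given integer $\ell$ in the stated range, write $n = q\ell + r$ with $0 \le r < \ell$, and take a distance sequence composed of $r$ copies of $q+1$ and $\ell - r$ copies of $q$ (to be arranged on a cycle). The bound $\ell \ge n/s$ forces $q \le s$. In the extreme case $q = s$ one has $r = 0$ and all distances equal $s$, which trivially satisfies both circuit conditions. When $q \le s - 1$, condition (2) is automatic, and the task reduces to choosing a cyclic arrangement that satisfies condition (1).

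The main obstacle will be verifying condition (1) and matching it precisely to the upper bound. If $2q \ge s+1$, then even two adjacent copies of the smaller value $q$ satisfy the constraint, so any arrangement works. The delicate case is $2q = s$ (which happens only when $s$ is even and $q = s/2$): here two adjacent $q$'s sum to only $s$, so no two copies of $q$ may be adjacent. Arranging $\ell - r$ such ``forbidden'' entries without adjacency on a cycle of length $\ell$ is possible precisely when $\ell - r \le r$, i.e., $2r \ge \ell$. The algebraic key is that the upper bound $\ell \le 2n/(s+1)$ rewrites as $\ell(s+1) \le 2n = 2q\ell + 2r$, and substituting $s = 2q$ yields exactly $\ell \le 2r$. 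Finally, the remaining range $2q \le s - 1$ cannot arise, as it would force $n/\ell < (s+1)/2$ and thus $\ell > 2n/(s+1)$, contradicting the hypothesis. Thus a valid distance sequence exists for every integer $\ell$ in the specified interval, completing the proof.
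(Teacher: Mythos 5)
Your proof is correct, and the necessity half is the same cyclic summation argument the paper uses. For sufficiency, your construction coincides with the paper's second (algebraic) proof: both distribute the distances as evenly as possible, taking $r$ copies of $q+1$ and $\ell-r$ copies of $q$ where $n=q\ell+r$. The difference lies in how condition (1) of Theorem~\ref{circuit cond} is verified. The paper routes this through three auxiliary floor/ceiling lemmas (Lemmas~\ref{rec}, \ref{fdf} and~\ref{sdf}), first establishes the condition at the upper endpoint $\ell=\lfloor 2n/(s+1)\rfloor$ with a case split on the parity of $s$, and then argues that the property propagates down to smaller admissible $\ell$. You instead trichotomize directly on $2q$ versus $s$: the case $2q\ge s+1$ is immediate for any cyclic arrangement, the case $2q\le s-1$ is ruled out by the upper bound, and in the boundary case $2q=s$ the upper bound $\ell(s+1)\le 2n=2q\ell+2r$ collapses to exactly the inequality $\ell\le 2r$ needed to separate the $\ell-r$ copies of $q$ on the cycle. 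This is shorter, dispenses with the auxiliary lemmas, and makes the role of each bound transparent; what it gives up relative to the paper's first (combinatorial) proof is the explicit description of the circuits and the vertex-sliding algorithm interpolating between the two extremes. In a final write-up you should make two small points explicit: that all distances are positive, i.e.\ $q\ge 1$, which follows from $\ell\le 2n/(s+1)\le n$ since $s\ge 1$; and the standard fact that $m$ forbidden entries can be placed with no two cyclically adjacent among $\ell$ positions if and only if $m\le \ell-m$ (place the $\ell-m$ allowed entries on the cycle and insert at most one forbidden entry per gap).
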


 Table~\ref{cirlen} shows the size of circuits for given $n$ and $k$.

\begin{table}[htdp] 
\begin{center}

{\small
$\begin{array}{c|ccccccccc}
  &k= 2 & 3 & 4 & 5 & 6 & 7 & 8 & 9 & 10\\ \hline
n= 3 & 
\begin{array}{c}
 3
\end{array}
 &  &  &  &  &  &  &  &\\
 4 & 
\begin{array}{c}
 2
\end{array}
 & 
\begin{array}{c}
 4
\end{array}
 &  &  &  &  &  & & \\
 5 & 
\begin{array}{c}
 2
\end{array}
 & 
\begin{array}{c}
 3
\end{array}
 & 
\begin{array}{c}
 5
\end{array}
 &  &  &  &  &  &\\
 6 & 
\begin{array}{c}
 2
\end{array}
 & 
\begin{array}{c}
\{2,3\}
\end{array}
 & 
\begin{array}{cc}
\{3,4\}
\end{array}
 & 
\begin{array}{c}
 6
\end{array}
 &  &  &  & & \\
 7 & 
\begin{array}{c}
 2
\end{array}
 & 
\begin{array}{c}
 2
\end{array}
 & 
\begin{array}{c}
 3
\end{array}
 & 
\begin{array}{c}
 4
\end{array}
 & 
\begin{array}{c}
 7
\end{array}
 &  &  &  & \\
 8 & 
\begin{array}{c}
 2
\end{array}
 & 
\begin{array}{c}
 2
\end{array}
 & 
\begin{array}{cc}
\{2,3\}
\end{array}
 & 
\begin{array}{cc}
\{3,4\}
\end{array}
 & 
\begin{array}{cc}
\{4,5\}
\end{array}
 & 
\begin{array}{c}
 8
\end{array}
 &  &  &\\
 9 & 
\begin{array}{c}
 2
\end{array}
 & 
\begin{array}{c}
 2
\end{array}
 & 
\begin{array}{cc}
\{2,3\}
\end{array}
 & 
\begin{array}{c}
 3
\end{array}
 & 
\begin{array}{cc}
\{3,4\}
\end{array}
 & 
\begin{array}{cc}
\{5, 6\} 
\end{array}
 & 
\begin{array}{c}
 9
\end{array}
 & & \\
 10 & 
\begin{array}{c}
 2
\end{array}
 & 
\begin{array}{c}
 2
\end{array}
 & 
\begin{array}{c}
 2
\end{array}
 & 
\begin{array}{cc}
\{2,3\}
\end{array}
 & 
\begin{array}{cc}
\{3,4\}
\end{array}
 & 
\begin{array}{cc}
\{4,5\}
\end{array}
 & 
\begin{array}{cc}
\{5, 6\} 
\end{array}
 & 
\begin{array}{c}
 10
\end{array} &\\
\vdots &\vdots &\vdots &\vdots &\vdots & \vdots&\vdots &\vdots&\vdots&\\
15 & 2 & 2& 2& 2& 
\begin{array}{cc}
\{2,3\}
\end{array}
&
\begin{array}{cc}
\{2,3\}
\end{array}
& 3& 
\begin{array}{cc}
\{3, 4\}
\end{array} &
\begin{array}{cc}
\{3, 4, 5\}
\end{array}\\
\multicolumn{10}{c}{}
\end{array}$}
\end{center}
\caption{Possible lengths of circuits for given $n$ and $k$\label{cirlen} }

\end{table}

{ Before giving a proof of Theorem~\ref{circuit length} we point out that Table~\ref{cirlen} shows that the conditions of  Horrocks~\cite{Hor2010} are unlikely to be satisfied except in the very special case of $\c(6,3)$. Theorem 21 requires that the circuits split into two sets, each of which is a partition of $\{1, 2, \ldots, n\}$ with specific conditions for each vertex. However, in most cases, the sizes of the circuits do not allow for such partitions, without even considering whether the vertex condition is satisfied. For example, for $\c(6,4)$, the circuit sizes are $3$ and $4$, and therefore, the circuits cannot create two partitions of $\{1, 2, \ldots, 6\}$. Obviously, this is not a rigorous proof that there is no instance in which the conditions of Theorem 21~\cite{Hor2010} are satisfied, but it corroborates what we have seen for $\c(6,4)$ and $\c(8,6)$, namely, that the set of losing positions is no longer a linear combination of some basis elements.  }

\begin{proof} $``\Rightarrow"$ Let $V=\{v_1,v_2,\ldots, v_{\ell}\}$ be a circuit. Then $d_i+d_{i+1} > s\ge s+1$ and $d_i \le s$ for all $i$. Summing over $i=1,\ldots, \ell$, we obtain 
$$ \sum_{i=1}^{\ell}d_i+d_{i+1}\ge\ell(s+1) \quad \text{and}\quad \sum_{i=1}^{\ell} d_i \le \ell\cdot s.$$
Since $d_{\ell+1}=d_1$ and $\sum_{i=1}^{\ell}d_i=n$, we have that 
$$ 2n\ge\ell (s+1)\quad \text{and}\quad n \le \ell\cdot s,$$
and solving for $\ell$ gives the desired inequalities.

$``\Leftarrow"$ To show that there are circuits of the given lengths we will exhibit a circuit for the lower and the upper bounds, and then provide an algorithm to create a circuit of any intermediate length from the circuit for the upper bound. 
Note that for $s=1$, the only circuit consists of all vertices (and thus has size $n$) as each subset of size $n-1$ or smaller is a face. For $s \ge 2$ and $n=m \cdot s +r$ with $0 \le r<s$,  we will show that the set $C=\{1, s+1, 2s+1, ....,m\cdot s+1\}\imod{n}$ is a circuit of size $\ell_1=\lceil n/s \rceil$. Note that $\ell_1 = m$ if $r=0$, and $\ell_1 = m+1$ if $r > 0$. Figure~\ref{circll} shows the construction for the circuit of size $\ell_1$, where the black dots indicate the vertices that make up the circuit, and the left and right end of the string of $n$ vertices are connected in the circular arrangement.

\begin{figure}[htp]
 \begin{center}
  \begin{pspicture}(-1,0.2)(10,5)
  { \psset{xunit=1.5,yunit=1.5}
\multiput(0,0.75)(0,-2){2}{  
\multiput(0.1,2.5)(1.7,0){2}{\spic}\rput(3,1.7){$\ldots$}\rput(3.75,1.7){\spic}
\pscurve(0.09,1.8)(0.55,1.95)(1.2,1.8)\rput(0.55,2.1){$d_1$}
\pscurve(1.2,1.8)(1.65,1.95)(2.3,1.8)\rput(1.65,2.1){$d_2$}
\psline(3.6,1.85)(3.75,1.8)
\pscurve(3.75,1.8)(4.25,1.95)(4.9,1.8)\rput(4.25,2.1){$d_m$}}

\pscurve(4.9,0.95)(5.35,1.1)(5.75,1)\rput(5.4,1.3){$d_{m+1}$}
\psdots*[dotstyle=*](4.9,0.87)
\psdots*[dotstyle=o](5.15,0.87)(5.6,0.87)
\rput(5.35,0.87){...}
\psline(4.8,0.8)(4.8,0.77)(5.7,0.77)(5.7,0.8)
\rput(5.3,0.6){{\tiny$r \ge 1$}}
\rput(-1,1.5){or}
} \end{pspicture}
\caption{Circuit construction for lower limit of $\ell$.}\label{circll}
\end{center}
\end{figure}
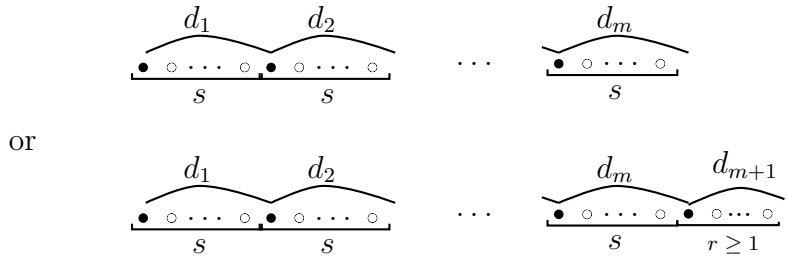

We need to show that $d_i+d_{i+1}>s$ and that $d_i\le s$. By construction of $C$, $d_i=s$ for $i=1,\ldots, m$, and if  $r >0$, then $d_{m+1}=r<s$. In addition, $d_i+d_{i+1}$ is either $2s$ or $s+r$ with $r \ge 1$, so $C$ is a circuit of size $\ell_1$. 

To create a circuit of size $\ell_2=\lfloor 2n/(s+1) \rfloor$, we spread out roughly twice as many vertices as evenly as possible, except potentially for the last one. Let $n=m' (s+1) + r'$,  $\hl=\lfloor (s+1)/2\rfloor$ and $\hu=\lceil (s+1)/2 \rceil$. Then  $\hl+\hu=s+1$.  Let $$C'=\{k(s+1)+1, k(s+1)+1+\hu \mid k=0,\ldots, m'-1\}, $$ that is, we start at  vertex $1$ and then alternate distances $\hu$ and $\hl$ (or have an exact even spread if $\hl=\hu$). As in the case of the lower bound, we have to define what happens in the case when $r' >0$. In fact, we need to distinguish between $r'<(s+1)/2$ and $r'\ge (s+1)/2$, since those two cases distinguish between $\ell_2=2m'$ and $\ell_2=2m'+1$. We claim that a circuit of size $\ell_2$ is given by 
$$\left\{\begin{array}{ll}C' & \text{if } r'<(s+1)/2 \\C''=C' \cup \{m'(s+1)+1\}& \text{if } r'\ge (s+1)/2\end{array}\right..$$

For example, if $n=31$ and $s=4$, then the circuit vertices are given by $C'=\{1,4,6,9,11,14,16,19,21,24,26,29\}$, while for $n=34$ and $s=4$, the circuit vertices consist of $C' \cup \{31\}$. By construction of $C'$ and $C''$, the first $2m'-1$ distances in  $D_{C'}$ and $D_{C''}$ are alternating between $\hu$ and $\hl$, and therefore, $d_i \le s$ (for $s \ge 1$ or $k<n$) and $d_i+d_{i+1}=s+1>s$ satisfying the circuit conditions independent of the value of $r'$. We now consider the two cases $r'<(s+1)/2$ and $r'\ge (s+1)/2$ separately to show the required inequalities for the remaining vertices.

 If $r'<(s+1)/2$, then $D_{C'}=\{\hu,\hl,\ldots,\hl,\hu,\hl+r'\}$ as $d_{2m'}=n+v_1-v_{2m'}$. Considering the two possibilities  $(s+1)/2\in \mathbb{N} $ and $(s+1)/2\notin \mathbb{N} $ separately, it can be shown that  $d_{2m'}=\hl+r'\le s$. Also, $d_{2m'-1}+d_{2m'}=d_1+d_{2m'}=\hu+(\hl+r')\ge s+1$. 
If on the other hand $r'\ge (s+1)/2$, then $D_{C''}=\{\hu,\hl,\ldots,\hl,\hu,\hl,r'\}$ with $r' \le s$, $d_{2m'}+d_{2m'+1}=\hl+r'\ge\hl+\hu=s+1$, and $d_{2m'+1}+d_1=r'+\hu\ge\hu+\hu\ge s+1$. Thus, $C'$ and $C''$ are circuits of size $\ell_2$.

To show the existence of circuits of intermediate length, we transform the circuit for the upper limit step by step into the circuit for the lower limit, reducing the number of vertices by one in each step. We now describe the algorithm, and will use the term {\em $s$-barrier (for the $i^{\text{th}}$ segment)} to denote the space between vertices $(i-1)\cdot s$ and $(i-1)\cdot s+1$ for $i\ge 1$.

\begin{itemize}
\item[Step 0:] Starting with the circuit of size $\ell_2$ described above, divide the vertices into segments of $s$ vertices. If the rightmost (partial) segment does not contain a circuit vertex, then move the rightmost circuit  vertex of the next-to-last segment into the last segment next to that segment's $s$-barrier. If this process leaves the next-to-last segment without a circuit vertex, then move the rightmost circuit vertex of the adjacent segment (on the left) into the next-to-last segment adjacent to its $s$-barrier. (Note that the two rightmost full segments of length $s$ have to contain at least three circuit vertices.)
\item[Step 1:]  Search for the leftmost  segment that does not consist of a single circuit vertex next to its $s$-barrier. 
\begin{itemize} 
\item[$\bullet$] If the segment contains two vertices, move (if needed) the left one to its $s$-barrier, and delete the second one. Move the leftmost circuit vertex in the next segment to its $s$-barrier and move all circuit vertices in the adjacent segments by the same amount to the left until there is a segment where the circuit vertex would have to cross an $s$-barrier. In that segment, move the left circuit vertex to its $s$-barrier and move the other circuit vertex (if any) to the left by the same amount as the leftmost vertex in that segment.
\item[$\bullet$] If the segment contains only one circuit vertex, move it to its $s$-barrier and repeat Step 1.
\end{itemize}
\item[Step 2:]  Repeat Step 1 until all segments consist of a single circuit vertex next to their respective $s$-barrier. 
\end{itemize}

We visualize this algorithm in Figure~\ref{alg} for the case of $n=31$ and $s=4$, that is, the game $\c(31,26)$. In this case, $\hu=3$ and  $\hl=2$, and circuits of sizes $\ell = 8,9,10,11$ and $12$ need to be displayed. We start with the configuration of circuit vertices at positions $5k+1$ and $5k+4$ for $k=0,\ldots, 5$. Positions with circuit vertices are displayed as black dots, other positions are displayed as open circles, and  $s$-barriers are displayed as dotted lines. For each reduction step, only the  segments where circuit vertices change positions are displayed, followed by the resulting circuit.

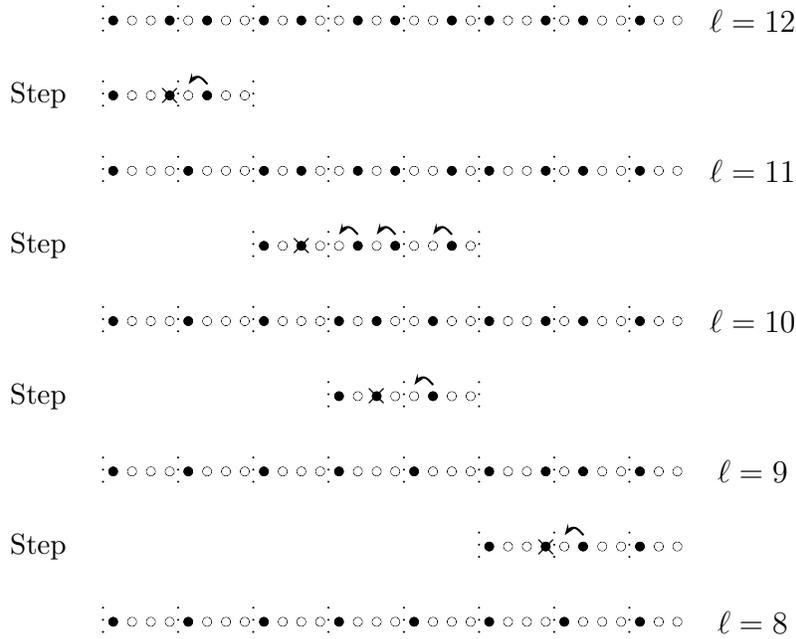
\begin{figure}[htp]
 \begin{center}
  \begin{pspicture}(0,-1)(12,9)
  { \psset{xunit=1,yunit=1}
\multiput(2,0)(.25,0){31}{\psdots*[dotstyle=o](0,0)}  
\multiput(2,0)(1,0){8}{\psdots*[dotstyle=*](0,0)}
\multiput(-0.012,0)(1,0){8}{\psline[linestyle=dotted](1.875,-0.15)(1.875,0.2)}
\rput(10.5,0){$\ell = 8$}

\multiput(2,2)(.25,0){31}{\psdots*[dotstyle=o](0,0)}  
\multiput(2,2)(1,0){6}{\psdots*[dotstyle=*](0,0)}
\put(7.75,2){\psdots*[dotstyle=*](0,0)}\put(8.25,2){\psdots*[dotstyle=*](0,0)}\put(9,2){\psdots*[dotstyle=*](0,0)}
\multiput(-0.012,2)(1,0){8}{\psline[linestyle=dotted](1.875,-0.15)(1.875,0.2)}
\rput(10.5,2){$\ell = 9$}

\multiput(2,4)(.25,0){31}{\psdots*[dotstyle=o](0,0)}  
\multiput(2,4)(1,0){4}{\psdots*[dotstyle=*](0,0)}
\put(5.5,4){\psdots*[dotstyle=*](0,0)}\put(6.25,4){\psdots*[dotstyle=*](0,0)}\put(7,4){\psdots*[dotstyle=*](0,0)}
\put(7.75,4){\psdots*[dotstyle=*](0,0)}\put(8.25,4){\psdots*[dotstyle=*](0,0)}\put(9,4){\psdots*[dotstyle=*](0,0)}
\multiput(-0.012,4)(1,0){8}{\psline[linestyle=dotted](1.875,-0.15)(1.875,0.2)}
\rput(10.5,4){$\ell = 10$}

\multiput(2,6)(.25,0){31}{\psdots*[dotstyle=o](0,0)}  
\multiput(2,6)(1,0){3}{\psdots*[dotstyle=*](0,0)}
\put(7,6){\psdots*[dotstyle=*](0,0)}
\put(7.75,6){\psdots*[dotstyle=*](0,0)}\put(8.25,6){\psdots*[dotstyle=*](0,0)}\put(9,6){\psdots*[dotstyle=*](0,0)}
\multiput(-0.012,6)(1,0){8}{\psline[linestyle=dotted](1.875,-0.15)(1.875,0.2)}
\multiput(4.5,6)(1.25,0){2}{\psdots*[dotstyle=*](0,0)}\multiput(5.25,6)(1.25,0){2}{\psdots*[dotstyle=*](0,0)}\rput(10.5,6){$\ell = 11$}

\multiput(2,8)(.25,0){31}{\psdots*[dotstyle=o](0,0)}  
\multiput(2,8)(1.25,0){6}{\psdots*[dotstyle=*](0,0)}
\multiput(2.75,8)(1.25,0){6}{\psdots*[dotstyle=*](0,0)}
\multiput(-0.012,8)(1,0){8}{\psline[linestyle=dotted](1.875,-0.15)(1.875,0.2)}
\rput(10.5,8){$\ell = 12$}

\multiput(2,7)(.25,0){8}{\psdots*[dotstyle=o](0,0)}  
\rput(2.75,7){\psdots*[dotstyle=*](0,0)}\rput(2.75,7){$\times$}
\pscurve{<-}(3,7.15)(3.125,7.25)(3.25,7.15)
\multiput(2,7)(1.25,0){2}{\psdots*[dotstyle=*](0,0)}
\multiput(-0.012,7)(1,0){3}{\psline[linestyle=dotted](1.875,-0.15)(1.875,0.2)}
\rput(1,7){{\small Step}}

\multiput(4,5)(.25,0){12}{\psdots*[dotstyle=o](0,0)}  
\rput(4.5,5){$\times$}
\pscurve{<-}(5,5.15)(5.125,5.25)(5.25,5.15)
\pscurve{<-}(5.5,5.15)(5.625,5.25)(5.75,5.15)
\pscurve{<-}(6.25,5.15)(6.375,5.25)(6.5,5.15)
\multiput(4,5)(1.25,0){3}{\psdots*[dotstyle=*](0,0)}\multiput(4.5,5)(1.25,0){2}{\psdots*[dotstyle=*](0,0)}
\multiput(1.988,5)(1,0){4}{\psline[linestyle=dotted](1.875,-0.15)(1.875,0.2)}
\rput(1,5){{\small Step}}

\multiput(5,3)(.25,0){8}{\psdots*[dotstyle=o](0,0)}  
\rput(5.5,3){$\times$}
\pscurve{<-}(6,3.15)(6.125,3.25)(6.25,3.15)
\multiput(5,3)(1.25,0){2}{\psdots*[dotstyle=*](0,0)}\rput(5.5,3){\psdots*[dotstyle=*](0,0)}
\multiput(2.988,3)(1,0){3}{\psline[linestyle=dotted](1.875,-0.15)(1.875,0.2)}
\rput(1,3){{\small Step}}

\multiput(7,1)(.25,0){11}{\psdots*[dotstyle=o](0,0)}  
\rput(7.75,1){$\times$}
\pscurve{<-}(8,1.15)(8.125,1.25)(8.25,1.15)
\multiput(7,1)(1.25,0){2}{\psdots*[dotstyle=*](0,0)}\multiput(7.75,1)(1.25,0){2}{\psdots*[dotstyle=*](0,0)}
\multiput(4.988,1)(1,0){3}{\psline[linestyle=dotted](1.875,-0.15)(1.875,0.2)}
\rput(1,1){{\small Step}}

} \end{pspicture}
\caption{Circuit construction for intermediate values  of $\ell$.}\label{alg}
\end{center}
\end{figure}

Each complete  application of Step 1 reduces the number of circuit vertices by one, and hence $\ell$ by one. Furthermore, the distance conditions for circuits remain intact. The initial rearrangement of vertices (if needed) at the right end creates distances that are at most $s$. In addition, the distances do not decrease, so condition (1) remains intact. Now let's look at the distances in the segments where vertices are moved or deleted. We proceed from left to right. 

In the segment in which the vertex is deleted, the distance between the vertices adjacent to the deleted vertex is exactly $s$. The vertices to the right of the deleted vertex  that moved by the same amount to the left  retain their relative distances. The rightmost of these vertices and the circuit vertex to its right that moved a smaller distance due to the non-crossing of barriers have a distance that is at most $s$. That circuit vertex and its neighbor to the right either maintain their distance (if in the same segment)  or their distance increases to at most $s$ (if  in different segments). Likewise, the sums of consecutive distances continues to be at least $s+1$.

This construction shows that circuits of all required sizes exist (they are obviously not unique), therefore completing the proof.

\end{proof}

We will also provide a second proof for Theorem~\ref{circuit length} which uses a purely algebraic approach to show that the circuit  conditions are satisfied when the stacks are spread out as equally as possible. The proof will involve many floor and ceiling functions as the stacks are integer distances apart. We provide three useful lemmas that will aide in the algebraic proof of Theorem~\ref{circuit length}.

\begin{lemma}[Reciprocities of ceilings]\label{rec}
If $x$, $y$ and $n$ are any three integers, then
$$\ceil{\frac{n}{x}} \le y \iff \ceil{\frac{n}{y}} \le x. $$
\end{lemma}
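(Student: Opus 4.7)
The plan is to show that both inequalities in the claimed equivalence are equivalent to the single symmetric condition $n \le xy$ (under the implicit assumption, forced by the context of the paper, that $x$ and $y$ are positive). This reduces the reciprocity to a simple algebraic manipulation.

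First I would record the elementary fact that, for any real $t$ and integer $m$, the condition $\lceil t \rceil \le m$ is equivalent to $t \le m$: the non-trivial direction uses that $m$ is itself an integer that is $\ge t$, so it lies in the set whose minimum defines $\lceil t \rceil$. Applying this with $t = n/x$ and $m = y$, the left-hand inequality $\lceil n/x \rceil \le y$ becomes $n/x \le y$, which (multiplying through by the positive integer $x$) is equivalent to $n \le xy$.

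By the exact same reasoning, applied now with $t = n/y$ and $m = x$, the right-hand inequality $\lceil n/y \rceil \le x$ becomes $n/y \le x$, which is equivalent to $n \le xy$. Since both conditions reduce to the same symmetric statement, they are equivalent to one another, completing the proof.

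The only subtle point is handling signs: the statement as written allows arbitrary integers, but the manipulation $n/x \le y \iff n \le xy$ requires $x > 0$ (and similarly $y > 0$ on the other side). In the applications that follow in the paper, the variables $x$, $y$ will represent lengths or counts and are manifestly positive, so I would simply note at the outset that the lemma is to be understood with $x, y \ge 1$; this is not a real obstacle but should be mentioned so the ceiling-to-integer passage is unambiguous.
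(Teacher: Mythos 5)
Your proof is correct, but it takes a genuinely different route from the paper's. The paper argues by case analysis on whether $x$ divides $n$: writing $n = kx$ or $n = kx+m$ with $0<m<x$, it computes $\ceil{n/x}$ explicitly and then uses the monotonicity of $t \mapsto \ceil{n/t}$ to bound $\ceil{n/y}$ by $\ceil{n/k}$ or $\ceil{n/(k+1)}$, with the reverse implication following by symmetry. You instead invoke the standard equivalence $\ceil{t}\le m \iff t\le m$ for integer $m$ and reduce \emph{both} inequalities to the single symmetric condition $n \le xy$, so the biconditional is immediate. Your argument is shorter, avoids the case split, and makes the underlying symmetry transparent rather than deferring one direction to ``by symmetry.'' You are also right to flag the positivity issue: the lemma as stated for ``any three integers'' is false in general (e.g.\ $n=2$, $x=-1$, $y=5$ gives $\ceil{n/x}=-2\le 5$ but $\ceil{n/y}=1\not\le -1$), and the paper's own proof tacitly assumes $x,y\ge 1$ by writing $n=kx+m$ with $0\le m<x$; making that hypothesis explicit, as you do, is an improvement, and it is harmless since in the application $x$ and $y$ are a positive span and a positive circuit length.
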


In the proof, we will repeatedly use the fact that $\lceil x \rceil$   is a non-decreasing function and thus, for a fixed $n$,  $x \le y$ implies  $ \lceil\frac{n}{y}\rceil \le  \lceil\frac{n}{x}\rceil$. 

\begin{proof}
If $n=k\cdot x$, then $\ceil{\frac{n}{x}}=k$  and $\ceil{\frac{n}{x}}=k \le y$ implies that $\lceil\frac{n}{y}\rceil \le \ceil{\frac{n}{k}} =x$. Suppose now that $n$ is not a multiple of $x$, that is, $n=k\cdot x+m$, for a positive $m < x$, and $\lceil \frac{n}{x}\rceil =k+1$. Note that $n=(k+1)x-(x-m)$, which together with  with $\lceil \frac{n}{x}\rceil =k+1\le y$ implies that 
$\lceil \frac{n}{y}\rceil \le \lceil \frac{n}{k+1}\rceil \le x$, because $x-m$ is positive. The reverse implication follows by symmetry. 
\end{proof}

\begin{lemma}[First double floor lemma]\label{fdf}
Let  $s$ and $ n$ be two natural numbers with $0 < s \le n$ and
\begin{equation}\label{fdfl} n=a \cdot s + b \text{ with } 0 \le b < s.\end{equation}
Then 
$\lfloor n/{\lfloor\frac{n}{s}\rfloor}\rfloor = s$ iff $b<a$; otherwise, $\lfloor n/{\lfloor\frac{n}{s}\rfloor}\rfloor > s$.
\end{lemma}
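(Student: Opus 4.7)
The plan is to reduce the nested floor to a straightforward arithmetic computation. By hypothesis $n=as+b$ with $0\le b<s$, so directly from the definition of the floor we obtain $\lfloor n/s\rfloor=a$. Thus the quantity of interest collapses to $\lfloor n/a\rfloor$, provided $a\ge 1$, which is ensured by the assumption $s\le n$ (if $s=n$, then $a=1$, $b=0$, and everything checks out trivially).

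Next I would rewrite $n/a=s+b/a$ and split into two cases according to the size of $b$ relative to $a$. If $b<a$, then $0\le b/a<1$, so $\lfloor n/a\rfloor=s+\lfloor b/a\rfloor=s$, which gives the forward direction of the equivalence. If instead $b\ge a$, then $b/a\ge 1$, which forces $\lfloor n/a\rfloor\ge s+1>s$, giving the ``otherwise'' part of the statement.

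Putting the two cases together yields the biconditional and the strict inequality simultaneously, so the lemma follows. The only real subtlety is the book-keeping at the boundary $a=1$ (where $s=n$ and $b=0$), and confirming that the division $b/a$ is well-defined, which both fall out of the hypothesis $0<s\le n$. I do not anticipate a serious obstacle here; the lemma is essentially the observation that $\lfloor n/\lfloor n/s\rfloor\rfloor$ equals $s$ precisely when the remainder $b=n-s\lfloor n/s\rfloor$ is smaller than the quotient $\lfloor n/s\rfloor$.
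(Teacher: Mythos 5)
Your proof is correct and follows essentially the same route as the paper: both compute $\lfloor n/s\rfloor=a$ from the division $n=as+b$, rewrite $\lfloor n/a\rfloor=s+\lfloor b/a\rfloor$, and read off the dichotomy according to whether $b<a$. Your extra remark that $s\le n$ guarantees $a\ge 1$ is a small point the paper leaves implicit, but it changes nothing of substance.
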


\begin{proof}
From~\eqref{fdfl} we have that $\fl{\frac{n}{s}}=a$. Therefore, 
$$\fl{ n/{\fl{\frac{n}{s}}}} = \fl{\frac{n}{a}}= \fl{ s+\frac{b}{a}}=  s+\fl{\frac{b}{a}}.$$The latter is equal to $s$ iff $b<a$, and otherwise, is bigger than $s$.
 \end{proof}

The next lemma deals with the case where $n$ is expressed as a multiple of a non-integer. 

\begin{lemma}[Second double floor lemma]\label{sdf}
Let  $0 \le m< n$ be two natural numbers and $f$ be a real number such that $0 \le f<1$. Define $\ell = \fl{\frac{n}{m+f}}$ and let $a$ and $b$ be the unique integers such that 
\begin{equation}\label{sdfl} n=a \cdot \ell + b \text{ with } 0 \le b < \ell.\end{equation}
If $m=\fl{\frac{n}{\ell}} $ then $\frac{b}{\ell}\ge f$.
\end{lemma}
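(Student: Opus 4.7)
The plan is to unpack the two floor-function definitions and observe that they combine into a single linear inequality. The hypothesis $\ell = \lfloor n/(m+f)\rfloor$ gives (assuming $m+f>0$) the lower-bound inequality
\[
\ell \le \frac{n}{m+f}, \qquad \text{i.e.,}\qquad \ell(m+f) \le n.
\]
This will be the one non-trivial fact I use; the upper bound $n < (\ell+1)(m+f)$ coming from the same floor will not be needed.

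Next I would identify the quotient $a$ in the division $n = a\ell + b$, $0\le b<\ell$. By the very definition of the floor, $a = \lfloor n/\ell\rfloor$, so the hypothesis $m = \lfloor n/\ell\rfloor$ forces $a = m$. Hence $n = m\ell + b$ with $0 \le b < \ell$.

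Finally, substituting $n = m\ell + b$ into $\ell(m+f) \le n$ gives
\[
\ell m + \ell f \;\le\; \ell m + b,
\]
which upon cancellation yields $\ell f \le b$, and therefore $b/\ell \ge f$, as required.

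The only step that could possibly be subtle is the identification $a = m$; the rest is bookkeeping. A quick sanity check shows no edge cases bite: if $m=0$ and $f>0$ the argument goes through verbatim; the degenerate case $m=f=0$ is excluded because then $\ell$ would not be defined. So the proof is essentially one line once the pieces are lined up.
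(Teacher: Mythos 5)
Your proof is correct and follows essentially the same route as the paper's: both arguments combine the lower-bound inequality $\ell(m+f)\le n$ coming from the floor defining $\ell$ with the identification of $m$ as the quotient of $n$ by $\ell$ (the paper writes $n=\ell(m+f)+c$ and verifies that $b=\ell f+c$, while you substitute $n=m\ell+b$ directly, which is a cleaner way to do the same bookkeeping). Your observation that $a=\lfloor n/\ell\rfloor$ must equal $m$ is exactly the step the paper carries out more laboriously, and your handling of the edge cases is fine.
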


\begin{proof}
By the definition of $\ell$, we have that $n=\ell \cdot (s+f) +c$, with $0 \le c<s+f$, where $c$ is not necessarily an integer. Then, $s=\fl{\frac{n}{\ell}}=\fl{s+f+\frac{c}{\ell}}=\fl{s+\frac{\ell \cdot f+c}{\ell}}=s+\fl{\frac{\ell \cdot f+c}{\ell}}$, and therefore, $\ell\cdot f+c<\ell$. Note that we can express $n$ as a multiple of $\ell$ as $n=s\cdot\ell+(\ell \cdot f+c)$, where $(\ell \cdot f+c)$ is an integer. Because $\ell\cdot f+c<\ell$, $\ell\cdot f+c$ is the residue modulo $\ell$ of $n$, that is $\ell \cdot f + c=b$ with $b$ as defined in \eqref{sdfl}. Thus, $\frac{b}{\ell}\ge f$.
 \end{proof}
 
 We are now ready for the alternative proof of Theorem~\ref{circuit length}.
 
 \begin{proof} 
{  We will make precise the immediate idea that if we want to distribute $\ell$ vertices as equidistant as possible among the $n$ vertices, then the distances should be roughly $n/\ell$.  Since we need integer values, the distances should be $\fl{n/\ell}$ and $\ceil{n/\ell}$. Let  $a$ and $b$ be integers such that
$$\label{floorcond} n=a \cdot \ell + b \text{ with } a=\fl{\frac{n}{\ell}} \text{ and } b=n\imod{\ell}.$$
By Theorem~\ref{circuit cond}, the conclusion will follow if we can distribute the  $\ell$ distances (and therefore determine the $\ell$ vertices) in such a way that
 \begin{enumerate} 
\item $d_i+d_{i+1} > s$   
and
\item $d_i \le s$
\end{enumerate}
for $ i=1,2,\ldots,\ell$, where $d_{\ell+1}= d_1$.}
For a value of $\ell$ that satisfies~\eqref{circlen}, we will construct an $\ell$-subset as follows:
Of the $\ell$ distances $d_1, d_2, \ldots, d_{\ell}$, we define $b$ distances to have value $a+1$ and the remaining $\ell-b$ distances to have value $a$. This assignment  satisfies the condition that the sum of the distances be $n$. By assumption, $n/s \le \ell$ and $\ell$  is an integer, so $\ceil{n/s}\le \ell$ and Lemma~\ref{rec} implies that $\max(d_i)=\ceil{n/\ell}\le s$, so  the second circuit condition holds for all values of $\ell$ satisfying ~\eqref{circlen}. 

{ To show the first circuit condition on the sums of consecutive distances, we consider the two cases $b=0$ and $b>0$ separately. If $b=0$, then $a=\frac{n}{\ell}$ and all vertices are distance $a$ apart. Since by assumption $\ell \le \frac{2n}{s+1}$,  $\frac{s+1}{2} \le \frac{n}{\ell} = a$, and therefore, $s+1 \le 2a = d_i+d_{i+1}$, so Theorem ~\ref{circuit length} follows in this case.

If $b>0$, we show that the first circuit condition holds when $\ell$ equals the upper bound $\fl{\frac{2n}{s+1}}$, and then show that the implication is also true for smaller values of $\ell$. Note that if there are at least as many distances of value $a+1$ as there are of value $a$ (that is, if $b \ge \ell - b$, or equivalently, $\ell/2 \le b$), then it is possible to order the distances in such a way that no two consecutive distances have value $a$, and $d_i + d_{i+1} \ge 2a+1$ for all $i$. Otherwise, there will be a pair of consecutive distances whose sum $d_i + d_{i+1} = 2a$ is minimal.}

Now let $\ell =\fl{\frac{2n}{s+1}}$ and assume that $s$ is odd, say $s=2m+1$. Then $\ell =\fl{\frac{n}{m+1}}$ and $\min(d_i)=\fl{n/\ell}=\fl{n/\fl{\frac{n}{m+1}}}\ge m+1$ (by Lemma~\ref{fdf}), so $d_i+d_{i+1}\ge 2(m+1)=s+1>s$. In the second case when  $s=2m$, then $\ell=\fl{\frac{n}{m+(1/2)}}\le \fl{\frac{n}{m}}$ and therefore, $\fl{\frac{n}{\ell}}\ge \fl{n/ \fl{\frac{n}{m}}}\ge m$, where the second inequality follows once more from Lemma~\ref{fdf}. If $\fl{\frac{n}{\ell}}\ge m+1$, then  $d_i+d_{i+1}>s$ as before. If $\fl{\frac{n}{\ell}}= m$, then Lemma~\ref{sdf} (with $f=1/2$) implies that $b/\ell \ge \frac{1}{2}$, that is, $b \ge \ell/2$. But this is precisely the case where in our construction $d_i+ d_{i+1} \ge 2a+1=2\fl{\frac{n}{\ell}}+1=2m+1=s+1>s$, and therefore the first circuit condition holds when $\ell$ equals the upper bound of~\eqref{circlen}.

{  It remains to be shown that if the first circuit condition holds for $\ell$, then it also holds for a value  $\ell'\le \ell$.  Let  $d'_1, d'_2, \ldots, d'_{\ell}$ be the distances that we obtain with $\ellÕ$ in our construction, and let $n=a' \cdot \ell'+b'$. Then
$a'=\fl{\frac{n}{\ell'}}\ge\fl{\frac{n}{\ell}}=a$, and therefore the only case we need to consider is the case for $s=2m$ and $a'=a$. Since 
$$n=a' \cdot \ell'+b'=a\cdot(\ell+(\ell'-\ell))+b'=a\cdot  \ell+(b'-a(\ell-\ell'))=a\cdot\ell+b,$$
we have that $b'\ge b\ge \ell/2 \ge \ell'/2$, and therefore we can distribute the distances $a'$ and $a'+1$ such that $d'_i+d'_{i+1}=2a'+1=2a+1\ge 2m+1>s$, and the proof is  complete.}
 \end{proof}

\section{Open questions}
We suggest a number of open questions for the interested reader to try his or her hand. 
\begin{enumerate}
\item General results for $\L_{\c(n,k)}$ for specific values of $k$:  We have derived general results for $\c(n,1)$, $\c(n,n-1)$, and $\c(n,n)$, the cases where $k$ is either small or large.  We also investigated general results for intermediate values of $k$, specifically $\c(2m,m)$. Recall that 
$$\L_{\c(4,2)}=\{(a,b,c,d)\mid a+b=c+d \wedge b+c=a+d\}\hspace{0.1in}\text{and}$$
$$\L_{\c(6,3)}=\{(a,b,c,d,e,f)| a+b = d+e \wedge b+c = e+f\}.$$
A natural conjecture for $\c(2m,m)$ based on $m=2,3$ could be that sums of pairs that are diagonally across from each other are the same, as indicated in Figure~\ref{diag conj}. Unfortunately, we found a counterexample for this conjecture. 

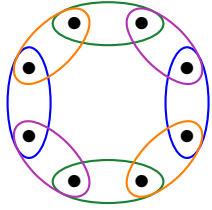
\begin{figure}[htp]
 \begin{center}
 { \psset{xunit=0.3,yunit=0.3}
\begin{pspicture}(11,11)
 \rput(4,2){$\bullet$}
 \rput(7,2){$\bullet$}
  \rput(4,9){$\bullet$}
 \rput(7,9){$\bullet$}
  \rput(2,4){$\bullet$}
 \rput(2,7){$\bullet$}
  \rput(9,4){$\bullet$}
 \rput(9,7){$\bullet$}
\psellipse[linecolor=mygreen](5.5,2)(2.5,1)
\psellipse[linecolor=mygreen](5.5,9)(2.5,1)
\psellipse[linecolor=blue](2,5.5)(1,2.5)
\psellipse[linecolor=blue](9,5.5)(1,2.5)
\psccurve[linecolor=purple](9.5,6.5)(8.7,8.7)(6.5,9.5)(7.3,7.3)
\psccurve[linecolor=purple](4.5,1.5)(3.7,3.7)(1.5,4.5)(2.3,2.3)
\psccurve[linecolor=orange](1.5,6.5)(2.3,8.7)(4.5,9.5)(3.7,7.3)
\psccurve[linecolor=orange](6.5,1.5)(7.3,3.7)(9.5,4.5)(8.7,2.3)
\end{pspicture}}
\caption{Conjecture for the losing set of $\c(2m,m)$}\label{diag conj}
\end{center}
\end{figure}

\item Results for  $n=7$: We know next to nothing about $\c(7,k)$. Recursively computed losing positions  for $k=3, 4, 5$ seem to always have an empty stack.
\item Use of subgame structure: It is easy to see that $\c(n,k)$ contains $\c(3,1)$ for $n \ge 3k$ (when $k-1$ empty stacks are followed by a non-empty stack). Can one make use of this fact (and similar subgames)? At minimum this fact indicates that the losing sets for larger values of $n$ will not be closed under addition. 
\item Finally, there are numerous variations on this game. Here are a few:
\begin{itemize}
\item  Select a  fixed number $a$ from at least one of the stacks;
\item Select a fixed number $a$ from each of the heaps;
\item  Select at least $a$ tokens from each of the $k$ heaps;
\item Select a  total of at least $a$ tokens from the $k$ stacks;
\item  Select a total of exactly $a$ tokens from the $k$ stacks.
\end{itemize}
\end{enumerate}


\bibliographystyle{plain}\pagestyle{headings}
\bibliography{CombGame}

\end{document}